\documentclass[final]{svjour3}
\smartqed
\usepackage{graphicx}
\usepackage{showkeys}
\usepackage{mathptmx}
\usepackage{latexsym}
\usepackage{amsmath}
\usepackage{amsfonts, textcomp, footmisc}
\usepackage{amssymb}
\usepackage{color}

\spdefaulttheorem{criterion}{Criterion}{\bf}{\it}

\newcommand{\rad}{\rm rad}
\newcommand{\disp}{\displaystyle}
\DeclareMathOperator{\dist}{dist}

\begin{document}

\title{Comparison principle, stochastic completeness and half-space theorems\thanks{G. P. Bessa partially supported by PRONEX/FUNCAP/CNPq\\
 J. H. Lira partially supported by PRONEX/FUNCAP/CNPq\\ A. Medeiros partially supported by CNPq}}

\author{G. P. Bessa  \and  J. H. de Lira \and A. A. Medeiros }

\institute{G. P. Bessa \and  J. H. de Lira \and A. A. Medeiros \at Universidade Federal do Cear\'{a} - UFC
              Departamento de Matem\'{a}tica - Campus do Pici \\
              60440-900, Fortaleza - CE, Brazil\\
              \email{bessa@mat.ufc.br, jorge.lira@mat.ufc.br, adriano.mat@hotmail.com}           
}

\date{Received: date / Accepted: date}

\maketitle

\begin{abstract}
We present a criterion for the stochastic completeness of a submanifold in terms of its distance to a hypersurface in the ambient space. This relies in a suitable version of the Hessian comparison theorem. In the sequel we apply a comparison principle with geometric barriers for establishing mean curvature estimates for stochastically complete submanifolds in Riemannian products, Riemannian submersions and wedges.  These estimates are applied for obtaining both horizontal and vertical half-space theorems for submanifolds in $\mathbb{H}^n \times \mathbb{R}^\ell$.
\keywords{comparison principle \and mean curvature \and stochastic completeness}
 \subclass{53C42 \and 53C21}
\end{abstract}
\tableofcontents
\section{Introduction}\label{intro}

In  recent years, an intense research effort has been redefining the geometric analysis on manifolds and submanifolds in terms of notions and theorems coming from the theory of stochastic processes. A fundamental piece for  bridging  these two subjects is the beautiful A. Grigor'yan's survey \cite{grigoryan} where he approached important stochastic notions via potential theory, what permitted apply  stochastic tools  to geometric elliptic equations in  deterministic terms.

The second landmark in this \textcolor{black}{way} is the  work by S. Pigola, M. Rigoli and A. Setti  in \cite{PRS-PAMS} where they proved that  stochastic completeness of  Riemannian manifolds is  equivalent to the validity of the weak maximum principle at infinity. Having established this link, Pigola, Rigoli and Setti set up  the tools for an impressive wealth of applications of the maximum principle in the analysis of geometric PDEs \cite{bessa-pigola-setti-revista},  \cite{PRS-gafa}, \cite{prs-memoirs},  \cite{prs-vanishing}, \cite{PSZ-JMPA}, \cite{puriser-jde},  \cite{yau2}. These tools are particularly useful in obtaining mean  curvature estimates on submanifolds \textcolor{black}{subject} to some kind of extrinsic bound, see \cite{albanese-alias-rigoli}, \cite{alias-bessa-dajczer-MathAnn}, \cite{alias-bessa-montenegro}, \cite{atsuji}, \cite{bessa-lima-pessoa} \cite{jorge-xavier-bms}, \cite{jorge-xavier-mathZ}, \cite{omori}, the main topic in this article.

\textcolor{black}{We are also concerned in this paper with the} comparison theory based on the distance to a hypersurface and the \textcolor{black}{companion} use of the Riccati equation. This is the subject of a series of papers by J.-H. Eschenburg \cite{eschenburg-manuscripta}, \cite{eschenburg-heintze-manuscripta} based on an idea of M. Gromov \cite{gromov}.  Nowadays this approach to Comparison Geometry is presented even in textbooks as an alternative  to the standard one using Jacobi fields.  In spite of this we needed to develop here a function theoretic version of the Hessian comparison theorem for the distance to a hypersurface which fits well with the generalized weak maximum principle by S. Pigola, M. Rigoli and A. Setti. This allowed us to enlarge the  range of application of this technique as well as to update in this sense the work by Eschenburg to the new trends on geometric analysis of submanifolds.
More precisely, we replace conditions on the ambient curvature with requirements on the existence of functions satisfying very mild analytical restrictions.
This slight change of point of view  \textcolor{black}{opens} new perspectives to applications of the stochastic tools to the theory of minimal and constant mean curvature submanifolds.

A Riemannian manifold $M$ is said to be stochastically complete if  the Wiener process associated
to the Laplacian $\Delta$ is stochastically complete.  This means that  the probability  that a particle in  Brownian motion in $M$ starting at $x\in M$  be found in $M$ at a time $t>0$ is $1$.
This is,  the heat kernel  $p$ of $\Delta$ satisfies the conservation property:  for all $\, x\in M$ and $t>0$ fixed,
\begin{equation}
\label{heatkernel}
\int_{M}p\left(x,y,t\right)  dy
=1.
\end{equation}

On the other hand,  the weak maximum principle (at infinity) holds on a Riemannian manifold
$M$ if for every $u\in C^{2}\left(M\right)  $, with $u^{\ast
}\colon=\sup_{M}u<+\infty$, there exists a sequence $\left\{  x_{k}\right\}\subset M  $ along which
\begin{eqnarray}
\label{wmp}
\begin{array}{lll}
\text{(i) }\,\,u\left(  x_{k}\right)  >u^{\ast}-\disp\frac{1}{k},&&\text{ (ii) }\,\,
\Delta u\left(  x_{k}\right)  <\disp\frac{1}{k}\cdot\end{array}
\end{eqnarray}

The  mean curvature estimates of submanifolds subject to \textcolor{black}{the extrinsic bounds} mentioned above are obtained using the weak maximum principle in the following way:
\textcolor{black}{consider an isometric immersion} $\varphi \colon M \to N$  of a complete Riemannian manifold $M$ into a complete Riemannian manifold $N$. Set  $u=g\circ\rho_{N}\circ \varphi$, where  $\rho_{N}={\rm dist}_N(o,\,\cdot\,)$ is the ambient distance function to a given reference point $o\in N$ and $g\in C^\infty(N)$ is a function defined by some geometric rationale.
The extrinsic bounds \textcolor{black}{to that $\varphi$ is constrained imply}   that  $u^{\ast}<\infty$.  \textcolor{black}{Suppose} in addition
that $M$ is stochastically complete and that there exist upper bounds for the sectional curvatures of the ambient manifold $N$. \textcolor{black}{Under these assumptions} we can apply the  Hessian comparison theorem \cite{GW}, \cite{petersen}, \cite[p.11]{prs-memoirs} or \cite[Thm. 2.3]{prs-vanishing} to $\varrho_N$.  \textcolor{black}{Hence}  we   obtain an explicit  lower bound for $\Delta u(x_k)$ in terms of the mean curvature ${\bf H}$ of $\varphi$ what, jointly with (\ref{wmp}-\emph{ii}), yields the desired estimate to $|{\bf H}|$.

That is, roughly stated, the technique applied in \cite{alias-bessa-dajczer-MathAnn} for proving  that,  given a complete Riemannian manifold $P^{n-\ell}$ and a proper isometric immersion $\varphi:M^m\to P^{n-\ell}\times\mathbb{R}^\ell$, with $m\ge  \ell +1$ and
\[
\varphi(M) \subset B_q^{P}(r) \times\mathbb{R}^{\ell},
\]
where $B_q^{P}(r)$ is a geodesic ball in $P$ centered at $q\in P$ with $r<\textrm{inj}_P (q)$, then
\[
|{\bf H}|\ge \frac{m-\ell}{m} \frac{1}{\sqrt{-\kappa}}\coth (\sqrt{-\kappa}r),
\]
provided that the radial sectional curvatures of $P$ along geodesics issuing from a given  point $q'\in \pi(\varphi(M))$ satisfy
\[
K_N^{\textrm{rad}}\le \kappa,
\]
for some constant $\kappa<0$. Here $\pi:P\times \mathbb{R}^{\ell}\to P$ is the standard projection. This is indeed a very particular consequence of the Theorem 6 in \cite{alias-bessa-dajczer-MathAnn}.

In this paper, we will consider applications of the weak maximum principle using the distance function $\varrho$ to an embedded oriented hypersurface $\Sigma_0\subset N$  instead of the distance function $\rho_N$  to a reference point $o\in N$. The corresponding Hessian comparison theorem (Theorem \ref{hessian} below) allows us to obtain estimates for the mean curvature in a even wider class of ambient spaces as Riemannian submersions.  We obtain a  comparison principle that is used to prove, in different settings, \textcolor{black}{a number of} mean curvature estimates. This principle is detailed in the following result, Theorem \ref{thm-product-intro}, a particular case of Theorem \ref{submersion}  in Section \ref{sec:comparison}. It gives mean curvature estimates  for stochastically complete submanifolds immersed in regular tubes around embedded hypersurfaces of product spaces. This naturally extends  the main result of \cite{alias-bessa-dajczer-MathAnn}.

 Let $N$ be  a complete Riemannian $n$-manifold  and assume that there exists an oriented,  complete, embedded  hypersurface $\Sigma_0\subset N$ and let $\varrho = \dist(\cdot,  \Sigma_0)$ be the \emph{signed} distance function from $\rho^{-1}(0)=\Sigma_0$ which is supposed to be regular in some region $\mathcal{U}\subset N$ free of focal points of $\Sigma_0$.
 Suppose that the radial sectional curvatures $K_N^{\rad}|_p, \,p\in \mathcal{U}$, along $\rho$-minimizing geodesics issuing orthogonally from $\Sigma_{0}$ are bounded above by some function $G\in C^\infty(\mathbb{R})$. More precisely, \begin{equation}
\label{Krad2}
K_N^{\rad}({\sf v}\wedge \bar\nabla \varrho(p))\le -G(\varrho(p)),
\end{equation} for every $p\in \mathcal{U}$ and every unit length vector ${\sf v}\in T_p N$ with $\langle {\sf v},\bar{\nabla}\varrho(p)\rangle=0$, where $\bar{\nabla}$ is the Riemannian connection in $N$.
Associate to $G$   the solution $h$ of the ODE
\begin{align}\label{eqh}
\begin{array}{l}
   h''(t)-G(t)h(t)=0,
  \end{array}
\end{align} and let $(d_*,d^*)$ be the maximal interval where $h$ is positive with $d_*<0<d^*$.

\begin{theorem}\label{thm-product-intro} Let $\varphi\colon M \to N\times L$ be an  isometric immersion of a  stochastically complete Riemannian $m$-manifold $M$ into  $N\times L$, where $L$ is a complete Riemannian $\ell$-manifold. Define
$\widetilde\varrho\colon \mathcal{U}\times L \to \mathbb{R} $, the lifting of $\varrho$
given by
$\widetilde\varrho (x,y)=\varrho (x)$. Suppose that
\begin{itemize}\item[1.] $\bar{\nabla}^2\varrho\big|_{T\Sigma_0}  \ge\disp \frac{h'(0)}{h(0)} \langle\,\,,\,\,\rangle\big|_{T\Sigma_0}$.
\item[]\item[2.]  $\varphi(M)\subset \mathcal{U}\cap \widetilde \varrho^{-1}\left((-\infty, d]\right)$ for some $d>0$ and  $\varrho(\varphi(x_0))\ge 0$ for some $x_0\in M$.
\item[]\end{itemize}
Then
\begin{equation}
\sup_{M}|{\bf H}| \geq \displaystyle\frac{m-\ell}{m}\inf_{ t\in [0, d]}\frac{h'(t)}{h(t)}\cdot
\end{equation}
In particular, there are no stochastically complete submanifolds in  $\mathcal{U}\cap \widetilde \varrho^{-1}\left((-\infty, d]\right)$  with mean curvature vector \textcolor{black}{satisfying} $|{\bf H}|< \displaystyle\frac{m-\ell}{m}\inf_{t\in [0, d]}\frac{h'(t)}{h(t)}\cdot$
\end{theorem}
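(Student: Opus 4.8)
I would derive the estimate from the weak maximum principle $(\ref{wmp})$, which holds on $M$ because, as recalled above, it is equivalent to stochastic completeness. The function to which I would apply it is not $\widetilde\varrho\circ\varphi$ itself but a rescaling $u=g\circ\widetilde\varrho\circ\varphi$, where the rescaling $g$ is prescribed by the ODE $(\ref{eqh})$: take $g(t)=\int_{0}^{t}h(s)\,ds$, so that $g$ is smooth and strictly increasing on $(d_{*},d^{*})\supseteq[0,d]$, where $h>0$, and---this is the crucial feature---$g''/g'=h'/h$. Write $\psi=\widetilde\varrho\circ\varphi$, so $u=g\circ\psi\in C^{2}(M)$. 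By hypothesis~2, $\psi\le d$ on $M$ and $\psi(x_{0})\ge 0$; as $g$ is continuous and increasing, $u^{\ast}:=\sup_{M}u=g(\sup_{M}\psi)<\infty$ with $\sup_{M}\psi\in[0,d]$. One may moreover assume $\inf_{t\in[0,d]}h'(t)/h(t)>0$, since otherwise the claimed bound holds trivially because $|{\bf H}|\ge 0$. Now $(\ref{wmp})$ produces a sequence $\{x_{k}\}\subset M$ along which $u(x_{k})>u^{\ast}-1/k$ and $\Delta u(x_{k})<1/k$.

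The heart of the matter is a lower bound for $\Delta u$. The chain rule together with the standard formula for the Laplacian of a function along an isometric immersion give, at any $x\in M$ and for any orthonormal frame $\{e_{i}\}$ of $T_{x}M$,
\[
\Delta u=g'(\psi)\left(\sum_{i}\bar{\nabla}^{2}\widetilde\varrho\big(d\varphi(e_{i}),d\varphi(e_{i})\big)+m\,\langle\bar{\nabla}\widetilde\varrho,{\bf H}\rangle\right)+g''(\psi)\,|\nabla\psi|^{2}.
\]
Since $\widetilde\varrho$ is the lift of $\varrho$, the field $\bar{\nabla}\widetilde\varrho$ has unit length and no component in $TL$, while $\bar{\nabla}^{2}\widetilde\varrho$ annihilates $TL$ as well as the radial direction $\bar{\nabla}\widetilde\varrho$. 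By Theorem~\ref{hessian}---whose curvature hypothesis is exactly $(\ref{Krad2})$ and whose initial condition along $\Sigma_{0}$ is exactly hypothesis~1---one has $\bar{\nabla}^{2}\varrho(X,X)\ge (h'(\varrho)/h(\varrho))\,(|X|^{2}-\langle X,\bar{\nabla}\varrho\rangle^{2})$ for all $X\in TN$, throughout $\mathcal{U}$. Lifting this to $T(N\times L)$ and summing over the frame (the $TL$- and radial components being annihilated by $\bar{\nabla}^{2}\widetilde\varrho$),
\[
\sum_{i}\bar{\nabla}^{2}\widetilde\varrho\big(d\varphi(e_{i}),d\varphi(e_{i})\big)\ \ge\ \frac{h'(\psi)}{h(\psi)}\left(\sum_{i}\big|d\varphi(e_{i})_{N}\big|^{2}-|\nabla\psi|^{2}\right),
\]
where $d\varphi(e_{i})_{N}$ is the $TN$-component and we used $|\nabla\psi|^{2}=\sum_{i}\langle d\varphi(e_{i}),\bar{\nabla}\widetilde\varrho\rangle^{2}$. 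Multiplying this inequality by $g'(\psi)=h(\psi)>0$ and feeding it into the expression for $\Delta u$, the identity $g''(\psi)=g'(\psi)h'(\psi)/h(\psi)$ makes the two $|\nabla\psi|^{2}$ terms cancel, leaving
\[
\Delta u\ \ge\ g'(\psi)\left(\frac{h'(\psi)}{h(\psi)}\sum_{i}\big|d\varphi(e_{i})_{N}\big|^{2}+m\,\langle\bar{\nabla}\widetilde\varrho,{\bf H}\rangle\right).
\]

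To conclude, the orthogonal projection of $T(N\times L)$ onto the $\ell$-dimensional factor $TL$ is a self-adjoint idempotent of rank $\ell$, so $\sum_{i}|d\varphi(e_{i})_{L}|^{2}\le\ell$ and hence $\sum_{i}|d\varphi(e_{i})_{N}|^{2}=m-\sum_{i}|d\varphi(e_{i})_{L}|^{2}\ge m-\ell$. Since $h'/h>0$ on $[0,d]$, evaluating the last displayed inequality at $x_{k}$, writing $t_{k}=\psi(x_{k})$, dividing by $g'(t_{k})=h(t_{k})>0$ and using $\Delta u(x_{k})<1/k$ together with $|\bar{\nabla}\widetilde\varrho|=1$, we get
\[
m\,|{\bf H}(x_{k})|\ \ge\ -m\,\langle\bar{\nabla}\widetilde\varrho,{\bf H}\rangle(x_{k})\ >\ (m-\ell)\,\frac{h'(t_{k})}{h(t_{k})}-\frac{1}{k\,h(t_{k})}.
\]
Because $u(x_{k})\to u^{\ast}$ and $g$ is a homeomorphism onto its image, $t_{k}=g^{-1}(u(x_{k}))\to g^{-1}(u^{\ast})=\sup_{M}\psi=:t_{\infty}\in[0,d]$, so $h(t_{k})$ is bounded away from $0$ and $h'(t_{k})/h(t_{k})\to h'(t_{\infty})/h(t_{\infty})$. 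Letting $k\to\infty$ then yields $\sup_{M}|{\bf H}|\ge\frac{m-\ell}{m}\,h'(t_{\infty})/h(t_{\infty})\ge\frac{m-\ell}{m}\inf_{t\in[0,d]}h'(t)/h(t)$, which is the claimed estimate; the ``in particular'' assertion is then immediate.

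The main obstacle---or rather, the one genuinely non-formal ingredient, once Theorem~\ref{hessian} is taken as given---is the choice $g(t)=\int_{0}^{t}h$. It is engineered precisely so that $g''/g'=h'/h$, and this is exactly what forces the gradient term $g''(\psi)|\nabla\psi|^{2}$ produced by the chain rule to compensate the defect of the Hessian comparison in the radial direction, where $\bar{\nabla}^{2}\widetilde\varrho$ vanishes. Had one instead worked directly with $\widetilde\varrho\circ\varphi$ (that is, $g=\mathrm{id}$), only the weaker constant $\tfrac{m-\ell-1}{m}$ would have come out, because the weak maximum principle---unlike the Omori--Yau principle---provides no control on $|\nabla\psi(x_{k})|$, so the radial parts of the vectors $d\varphi(e_{i})$ cannot be discarded at the points $x_{k}$. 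The remaining ingredients---the composition formula for $\Delta$, the trace bound $\sum_{i}|d\varphi(e_{i})_{L}|^{2}\le\ell$, the sign bookkeeping behind the reduction to $\inf_{[0,d]}h'/h>0$, and the passage to the limit---are routine.
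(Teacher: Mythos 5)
Your proof is correct and follows essentially the same route as the paper's: the same test function $g\circ\widetilde\varrho\circ\varphi$ with $g(t)=\int h$, the same application of the reverse direction of the Hessian comparison (Theorem \ref{hessian}) at a maximizing sequence supplied by the weak maximum principle, and the same trace bound giving $\sum_i|(\varphi_*{\sf e}_i)_{L}|^2\le\ell$. Your treatment of the limit along the sequence and of the trivial case $\inf_{[0,d]}h'/h\le 0$ is, if anything, slightly more explicit than the paper's.
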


This   paper is organized as follows. In section \ref{sec:comparison}, we establish a  Hessian comparison theorem for a signed distance to an oriented embedded  hypersurface $\Sigma_{0}\subset N$. In Section \ref{sec:criteria} we present a version of  Pigola-Rigoli-Setti's criterion for stochastically completeness in the setting of a distance to a hypersurface. In section \ref{sec:generalized-comparison} we prove a comparison principle which is used in the sequel for establishing several mean curvature estimates. These estimates lead to the proof of Theorem \ref{thm-product-intro} as well as to some interesting geometric results.  Indeed,  we show in Section \ref{half-space-section} how the mean curvature estimates are used for obtaining both vertical and horizontal half-space theorems for constant mean curvature submanifolds in product spaces $\mathbb{H}^{n}\times \mathbb{R}^\ell$. Our half-spaces theorems generalize, with simpler proofs, some recently proven results for surfaces in $\mathbb{H}^{2}\times \mathbb{R}$. Finally in Section \ref{half-space-section} we establish mean curvature estimates for stochastically complete submanifolds immersed in a wedge in a Riemannian product. These estimates improve results of \cite{bessa-lima-pessoa} and \cite{mari-rigoli}.

\section{Hessian comparison theorem}\label{sec:comparison}

Let $N$ be a Riemannian manifold and $\Sigma_0\subset N$  a complete embedded oriented hypersurface. Let $\varrho\colon N \to \mathbb{R}$,
$\varrho(x)=\dist (x,\, \Sigma_0)$,  be the \emph{signed} distance function from $\rho^{-1}(0)=\Sigma_0$ which is supposed to be regular in some region $\mathcal{U}\subset N$ free of focal points of $\Sigma_0$.  The following theorem is an extension of the Hessian comparison theorem, see \cite{GW}, \cite{petersen}, \cite[p.11]{prs-memoirs} and \cite[Theorem 2.3]{prs-vanishing}.

\begin{theorem}[Hessian Comparison Theorem] \label{hessian}  Suppose that there exists an even smooth function $G\colon\mathbb{R}\to\mathbb{R}$ such that
\begin{equation}
\label{comp-K2-a}
K_N({\sf v}\wedge \bar{\nabla}\varrho(p))\ge -G(\varrho(p)),
\end{equation}
for every $p$ in some open domain $\mathcal{U}\subset N$ where $\varrho$ is regular and every unit length vector ${\sf v}\in T_p N$ with $\langle {\sf v},\bar{\nabla}\varrho(p)\rangle=0$. Let $h$ be a solution of the ODE
\begin{equation}
\label{jacobi2-a}
h''-Gh=0.
\end{equation}
Let $(d_*,d^*)$ the maximal interval where $h$ is positive with $d_*<0<d^*$. If
\begin{equation}
\label{comp-A2-a}
\bar{\nabla}^2\varrho\big|_{T\Sigma_0}  \le \frac{h'(0)}{h(0)} \langle\,\, ,\,\rangle\big|_{T\Sigma_0}
\end{equation}
then
\begin{equation}
\label{comp-AA2}
\bar{\nabla}^2 \varrho({\sf v}, {\sf v}) \le \frac{h'}{h}\langle {\sf v}, {\sf v}\rangle,
\end{equation}
for any $(p, {\sf v})\in T\mathcal{U}$ with $\langle {\sf v}, \bar{\nabla}\varrho(p)\rangle=0$, whenever $0\le \varrho(p)<d^*$.
\vspace{2mm}

\noindent If instead of \eqref{comp-K2-a} and  \eqref{comp-A2-a} we have
\begin{eqnarray}
\label{comp-reverse1}
K_N({\sf v}\wedge \bar{\nabla}\varrho(p))& \leq & -G(\varrho(p)) \end{eqnarray} and \begin{eqnarray}\bar{\nabla}^2\varrho\big|_{T\Sigma_0}& \geq & \disp \frac{h'(0)}{h(0)} \langle\,\,,\,\,\rangle\big|_{T\Sigma_0}\label{comp-reverse2}
\end{eqnarray}
 then
\begin{equation}
\label{conc-reverse}
\bar{\nabla}^2 \varrho({\sf v}, {\sf v}) \ge \frac{h'}{h}\langle {\sf v}, {\sf v}\rangle.
\end{equation}
Here $\bar{\nabla}\varrho$ and $\bar{\nabla}^2\varrho$ are  respectively the gradient and the hessian of $\varrho$ and  $K_N({\sf v}\wedge \bar{\nabla}\varrho)$ is the sectional curvature of the two-plane ${\sf v}\wedge \bar{\nabla}\varrho$ along the geodesics issuing orthogonally from $\Sigma_0$.

\end{theorem}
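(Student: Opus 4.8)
The plan is to reduce the statement to a one-dimensional comparison via the Riccati equation satisfied by $\bar\nabla^2\varrho$ along the normal geodesics issuing from $\Sigma_0$. Fix a unit-speed geodesic $\gamma\colon[0,\varrho(p)]\to\mathcal{U}$ with $\gamma(0)\in\Sigma_0$, $\gamma'(0)\perp T\Sigma_0$, and $\gamma(\varrho(p))=p$. Along $\gamma$ the gradient $\bar\nabla\varrho$ restricts to $\gamma'$, and the shape operator $S(t)$ of the level hypersurface $\varrho^{-1}(t)$ at $\gamma(t)$, i.e. the restriction of $\bar\nabla^2\varrho$ to $\gamma'(t)^\perp$, satisfies the matrix Riccati equation $S' + S^2 + R_\gamma = 0$, where $R_\gamma(t) = \bar R(\cdot,\gamma'(t))\gamma'(t)$ is the curvature endomorphism; this is standard (it is exactly the setup in Eschenburg's papers cited in the introduction). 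The hypothesis \eqref{comp-K2-a} says $\langle R_\gamma(t)\mathsf{v},\mathsf{v}\rangle \ge -G(\varrho(p))\,|\mathsf{v}|^2$ for $\mathsf{v}\perp\gamma'$, while $h'/h$ solves the scalar Riccati equation $(h'/h)' + (h'/h)^2 - G = 0$ with $(h'/h)(0) = h'(0)/h(0)$.

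Next I would run the standard Riccati comparison argument. Set $\lambda(t)=h'(t)/h(t)$ and consider the symmetric endomorphism field $T(t) = S(t) - \lambda(t)\,\mathrm{Id}$ acting on the orthogonal complement (parallel-transported along $\gamma$). From the two Riccati equations one computes $T' = S' - \lambda'\,\mathrm{Id} = -(S^2 + R_\gamma) - (G - \lambda^2)\mathrm{Id} = -(S^2 - \lambda^2\mathrm{Id}) - (R_\gamma + G\,\mathrm{Id})$. Using $S^2 - \lambda^2\mathrm{Id} = \tfrac12(T(S+\lambda\mathrm{Id}) + (S+\lambda\mathrm{Id})T)$ one gets a linear-in-$T$ inequality of the form $T' \le -\tfrac12(AT + TA)$ in the sense of quadratic forms, where $A = S + \lambda\,\mathrm{Id}$ and where the curvature term $R_\gamma + G\,\mathrm{Id} \ge 0$ is discarded in the right direction by \eqref{comp-K2-a}. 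The initial condition \eqref{comp-A2-a} gives $T(0) \le 0$. A Gr\"onwall-type argument for symmetric operators (differentiate $\langle T(t)e(t), e(t)\rangle$ along the appropriate time-dependent conjugation, or use the standard fact that $T(0)\le 0$ together with such a linear differential inequality forces $T(t)\le 0$ as long as everything stays defined) then yields $S(t)\le\lambda(t)\,\mathrm{Id}$ on $[0,\varrho(p)]\subset[0,d^*)$, which is precisely \eqref{comp-AA2}. The reverse statement, from \eqref{comp-reverse1}–\eqref{comp-reverse2}, is obtained by reversing all inequalities verbatim, the curvature term now being discarded in the opposite sense.

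Two points deserve care, and these I expect to be the main obstacles. First, one must make sure that $\lambda=h'/h$ remains finite on the whole interval $[0,\varrho(p)]$: this is exactly why $(d_*,d^*)$ is taken to be the maximal interval on which $h>0$, and the hypothesis $0\le\varrho(p)<d^*$ guarantees no blow-up of the scalar solution before we reach $p$; one also needs $\gamma$ to stay in $\mathcal{U}$, which holds because $\varrho$ is assumed regular and focal-point-free there, so $S(t)$ itself stays finite and smooth along $\gamma$. Second, the passage from the scalar differential inequality on quadratic forms to the pointwise conclusion must be done carefully because $S$ and $\lambda\,\mathrm{Id}$ need not commute; the clean way is to test against a unit vector field $e(t)$ along $\gamma$ chosen so that at the final time $e(\varrho(p))=\mathsf{v}$, parallel, and to control $\frac{d}{dt}\langle T e,e\rangle$ using the linear inequality — this is the classical Riccati comparison lemma (e.g. as in \cite{eschenburg-heintze-manuscripta}), so I would state it as a lemma and cite it rather than reprove it. With those two technical points handled, the argument is otherwise a direct translation of the Hessian comparison theorem for the distance to a point into the hypersurface setting, the only genuinely new ingredient being that the initial value of the scalar Riccati solution is prescribed by the principal curvature bound \eqref{comp-A2-a} on $\Sigma_0$ rather than by the asymptotics $\lambda(t)\sim 1/t$ at a point.
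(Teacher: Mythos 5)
Your proposal is correct and rests on the same skeleton as the paper's proof: reduce to the matrix Riccati equation \eqref{riccati-a} for the shape operator of the equidistants along a unit-speed normal geodesic, and compare it with the scalar Riccati solution $\lambda=h'/h$, whose initial value is prescribed by \eqref{comp-A2-a} rather than by the asymptotics at a pole. The only substantive divergence is the mechanism used to propagate the operator inequality. The paper follows \cite[Thm. 3.1]{eschenburg-manuscripta}: it first strengthens \eqref{comp-K2-a} to the strict inequality \eqref{contradiction-a}, sets $\delta=\sup\{d:A(d)\le\widetilde A(d)\}$, shows the inequality at $\delta$ must be strict by differentiating $\langle(\widetilde A-A)\mathsf{v},\mathsf{v}\rangle$ along a parallel null eigenvector and reading off a sign from the curvature gap, pushes $\delta$ forward to contradict maximality, and finally relaxes the strict inequality by continuity. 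You instead set $T=S-\lambda\,\mathrm{Id}$, derive the linear differential inequality $T'\le -\tfrac12(AT+TA)$ with $A=S+\lambda\,\mathrm{Id}$ (your algebra is fine; in fact $\lambda\,\mathrm{Id}$ commutes with $S$, so the symmetrization is automatic), and invoke the Eschenburg--Heintze Riccati comparison \cite{eschenburg-heintze-manuscripta}, i.e.\ conjugate $T$ by the solution of a linear ODE built from $A$ so that the conjugated form is monotone and the sign of $T(0)$ propagates. Your route avoids the strict-inequality-then-relax step and the first-failure-time contradiction, at the cost of importing the comparison lemma as a black box; both tools are standard and both references already appear in the paper. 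The two technical points you flag -- finiteness of $h'/h$ on $[0,\varrho(p)]$ because $\varrho(p)<d^*$, and the normal geodesic remaining in the focal-point-free region $\mathcal{U}$ -- are exactly the ones the paper also relies on, so I see no gap.
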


\begin{proof}\hspace{-1mm}: The proof of this result is based on the proofs of \cite[Thm. 3.1]{eschenburg-manuscripta} and \cite[Thm. 2.3]{prs-vanishing}.
Using Fermi coordinates  in $\mathcal{U}$ it is  easily verified  the validity of the well-known Riccati's equation (see \cite{eschenburg-manuscripta} for details)
\begin{equation}
\label{riccati-a}
\bar{\nabla}_{\bar{\nabla}\varrho}A + A^2 + \mathcal{R} =0,
\end{equation}
where $A(X)=\bar{\nabla}_{_X}\bar{\nabla}\varrho$ and the endomorphism $\mathcal{R}$ is given by
\[
\mathcal{R} ({\sf v}) = \bar R({\sf v}, \bar{\nabla}\varrho)\bar{\nabla}\varrho
\]
and $\bar{R}$ denotes the Riemann tensor in $N$.   Fix a minimizing geodesic $\gamma:[-d_0,d_0]\to N$ so that it crosses $\Sigma_0$ orthogonally at $d=0$.  Assume  that
\begin{equation}
\label{contradiction-a}
K_N({\sf v}\wedge \gamma'(d))> -G(\varrho(d))
\end{equation}
for $d\in (-d_0,d_0)$. Let $A(t)=A|_{\gamma(t)}$ and $\widetilde{A}(t)=\disp\frac{h'}{h}(t)Id|_{\gamma(t)}$ be
the restrictions of the $(1,1)$-tensors $\bar{\nabla}_{_{(\cdot)}}\bar{\nabla}\varrho$ and $\disp\frac{h'}{h}Id$
to $\gamma'^\perp$ along $\gamma$. Suppose that $h$ has no zeroes in $[0,d_0]$, this is,  $d_0<d^*$.
Recall that $A(t)\leq \widetilde{A}(t)$ if and only if $\langle A(t){\sf v},{\sf v}\rangle \leq \langle \widetilde{A}(t){\sf v},{\sf v}\rangle$ for all ${\sf v}\in \gamma'^{\perp}\subset T_{\gamma(t)}N$. Define $$\delta=\sup\{d\in (0,d_0]\colon A(d)\le \widetilde A(d)\}.$$ Suppose that $\delta<d_0$. By continuity, $A(\delta)\le \widetilde A(\delta)$. We claim that the strict inequality holds indeed. Otherwise, $\widetilde A(\delta)-A(\delta)\ge 0$ has  nontrivial kernel. Hence if ${\sf v}\in T_{\gamma(\delta)} N$ is a null eigenvector of this operator, then the function
\[
\lambda(t) = \langle (\widetilde  A-A){\sf v}(t),{\sf v}(t)\rangle
\]
is nonnegative in $[0,\delta]$ with a zero at $\delta$, where ${\sf v}(t)$ is the parallel transport of ${\sf v}$ along $\gamma|_{[0,\delta]}$. Denoting $\phi= \displaystyle\frac{h'}{h}$ and using (\ref{jacobi2-a}) and (\ref{riccati-a}) we compute
\begin{eqnarray*}
\lambda'(d)  &=&  \langle (A^2-\phi^2 \cdot Id){\sf v}(d),{\sf v}(d)\rangle + \langle (\mathcal{R}+G\cdot Id){\sf v}(d),{\sf v}(d)\rangle\\
& = & \langle (A+\phi \cdot Id)(A-\phi \cdot Id){\sf v},{\sf v}\rangle + \langle (\mathcal{R}+G\cdot Id){\sf v},{\sf v}\rangle.
\end{eqnarray*}By the fact that $A{\sf v}=\widetilde A{\sf v}$ at $\gamma(\delta)$ one has
\[
\lambda'(\delta) =\langle (\mathcal{R}+G \cdot Id){\sf v},{\sf v}\rangle|_{\gamma(\delta)}>0
\]
in view of (\ref{contradiction-a}). However, this contradicts the fact that $\lambda|_{[0,\delta]}\ge 0$ and $\lambda(\delta)=0$. This proves the claim. By continuity we have $A\le \widetilde A$ in an interval of the form $[\delta, \delta+\varepsilon)$, for some $\varepsilon>0$. This contradicts the definition of $\delta$. This contradiction implies  that $\delta=d_0$. More precisely, $A\le \widetilde A$ in the whole interval $[0,d_0]$. In particular, if $\widetilde d^*>0$ is the first instant at which $\widetilde A$ is singular, then  $d_0\le \widetilde d^*$. Finally the strict inequality in (\ref{contradiction-a}) may be relaxed by continuity. The case with inverted inequality is similar. This finishes the proof. \hfill $\square$
\end{proof}

\begin{remark}\label{geometries}
In geometric terms, the conditions (\ref{comp-K2-a})-(\ref{comp-A2-a}) compare Riccati's equations in $N$ and in a model  space $\widetilde N= (d_*, d^*)\times_h \mathbb{S}^{n-1}$. Following \cite[Theorem 3.1]{eschenburg-manuscripta} we compare eigenvalues of the Weingarten map of $\Sigma_0$ and of the leaf $\widetilde\Sigma_0=\{0\}\times \mathbb{S}^{n-1}$. This leaf corresponds to the zero level set of the distance function $\widetilde\varrho=\dist(\cdot, \widetilde \Sigma_0)$ in $\widetilde N$. In this way, (\ref{comp-A2-a}) implies that
\[
\min_{{\sf v}} \langle A{\sf v}, {\sf v}\rangle  \ge \max_{\widetilde{\sf v}}\langle \widetilde A {\sf v}, {\sf v}\rangle,
\]
where the minimum and maximum are taken, respectively,  over ${\sf v}\in T\Sigma_0$ and $\widetilde{\sf v}\in T\widetilde\Sigma_0$ with $|{\sf v}|=|\widetilde{\sf v}|=1$,
what parallels exactly the main assumption in \cite[Theorem 3.1]{eschenburg-manuscripta}.
\end{remark}

\section{Pigola-Rigoli-Setti's criterion for stochastic completeness}\label{sec:criteria}
An important property of the class of  stochastically complete submanifolds is the validity of weak maximum principle at infinity. Thus. it would be useful and necessary to know when a given submanifold is stochastically complete in terms of their intrinsic and extrinsic geometries.  For instance, D. Stroock \cite{stroock} proved that properly embedded minmal surfaces of $\mathbb{R}^{3}$  are stochastically complete. A. Kasue \cite{kasue} extended  Stroock's theorem to  complete immersed submanifolds of $\mathbb{R}^{n}$ with bounded mean curvature. These results were all extended by Pigola, Rigoli and Setti, \cite[Thm. 1.9 \& Example 1.14]{prs-memoirs}, in a criterion that gives general geometric conditions for an  immersed submanifold to be stochastically complete. Indeed they proved the following
\begin{criterion}[Pigola-Rigoli-Setti]\label{criterion1}
Let $\varphi\colon M \to N$ be a proper isometric immersion of a  complete Riemannian manifold $M$ into a complete Riemannian manifold $N$. Let $p\in N\backslash \varphi(M)$ and suppose that either  ${\rm cut}_{N}(p)=\emptyset$ or $\varphi (M)\cap {\rm cut}_{N}(p)=\emptyset$. Let $\varrho_N = \dist_N(\cdot, p)$ be the distance to the point $p$. If the radial sectional curvatures of $N$ along the geodesics issuing from $p$ satisfies
\begin{equation}
\label{Krad}
K_{N}^{\rad}(q)\geq - G(\rho_{N}(q)), \quad q \in N\backslash ({\rm cut}_N(p)\cup \{p\}),
\end{equation}
and the mean curvature vector ${\bf H}$ of the immersion $\varphi$ satisfies
\begin{equation}
\label{HG}
\vert {\bf H} \vert (\varphi(x))\leq B\,\varphi\sqrt{G(\rho_{N}\circ \varphi(x))}, \quad x\in M,
\end{equation}
for some positive constant $B$ and a smooth function $G$ defined on $[0, \infty)$ satisfying
\begin{equation}
\label{G}
G(0)>0,\,\,\,\,G'(t)\geq 0, \,\,\,\,G^{-1/2}(t)\not\in L^{1}(+\infty), \,\,\,\,\limsup_{t\to\infty}\frac{tG(t^{1/2})}{G(t)}<+\infty
\end{equation}
on $[0,\infty)$, then $M$ is stochastically complete.\label{criterion}
\end{criterion}
\begin{remark}The criterion \ref{criterion1} holds when $M=N$. In this case, the mean curvature ${\bf H}$ of $M$  is zero, thus  the Criterion \ref{criterion1} says that if the radial sectional curvature $K_{M}^{\rad}$ decays as in \eqref{Krad} then $M$ is stochastically complete.
\end{remark}
\begin{remark}Example of functions satisfying \eqref{G} are given by \[G(t)=t^2\cdot\disp\Pi_{i=1}^{k}\left(\log^{(i)}(t)\right)^{2}, \,\,t\gg 1\]where $\log^{(i)}$ stands for the $i$-th iterated logarithm.\end{remark}
This criterion may be  extended, replacing $\varrho_N$ by the distance from an embedded oriented hypersurface  $\Sigma_0\subset N$ with  the same reasoning as in \cite[Examples 1.13, 1. 14]{prs-memoirs}.
\begin{theorem}
Let $\varphi\colon M \to N$ be an  isometric immersion of a  complete Riemannian manifold $M$ into a complete Riemannian manifold $N$. Suppose there exists an embedded oriented complete hypersurface  $\Sigma_0\subset N$ so that the signed distance function $\varrho = \dist(\cdot, \Sigma_0)$ is regular function in some domain  $\mathcal{U}\subset N$, that   $\varphi(M)\subset \mathcal{U}$ and that $\varrho \circ \varphi $ is proper.  If the radial sectional curvatures of $N$ along the geodesics issuing perpendicularly from $\Sigma_0$ satisfies
\begin{equation}
\label{Krad2}
K_{N}^{\rad}(p)\geq -G(\varrho(p)), \quad p \in \mathcal{U},
\end{equation}
and the mean curvature vector ${\bf H}$ of the immersion $\varphi$ satisfies
\begin{equation}
\label{HG2}
\vert {\bf H} \vert (\varphi(x))\leq B\, \sqrt{G(\varrho\circ \varphi(x))}, \quad x\in M,
\end{equation}
for some positive constant $B$ and a smooth function $G$ defined on $[0, \infty)$ satisfying
\begin{equation}
\label{G2}
G(0)>0,\,\,\,\,G'(t)\geq 0, \,\,\,\,G^{-1/2}(t)\not\in L^{1}(+\infty)
\end{equation}
on $[0,\infty)$, then $M$ is stochastically complete.\label{criterion2}
\end{theorem}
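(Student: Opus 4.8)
The plan is to reduce the statement to a Khasminskii-type test for stochastic completeness: a complete Riemannian manifold $M$ is stochastically complete as soon as one produces $\gamma\in C^{2}(M)$ with $\gamma\ge 0$, $\gamma(x)\to+\infty$ as $x$ leaves every compact subset of $M$, and $\Delta\gamma\le\Lambda\gamma$ outside some compact set, for a constant $\Lambda>0$ (see \cite{grigoryan} and \cite{prs-memoirs}; this is what underlies Criterion \ref{criterion1}). So the whole problem becomes the construction of such a $\gamma$ from $u:=\varrho\circ\varphi$. Since $\varphi(M)\subset\mathcal{U}$ and the \emph{signed} distance $\varrho$ is smooth on $\mathcal{U}$, we have $u\in C^{\infty}(M)$; since $u$ is proper and $M$ is complete, Hopf--Rinow gives $|u(x)|\to+\infty$ as $x\to\infty$, so $K:=\{\,|u|\le 1\,\}$ is compact, $u$ does not vanish on $M\setminus K$, and $\gamma:=f(u)$ is an exhaustion of $M$ for \emph{any} even $f\in C^{\infty}(\mathbb{R})$ with $f\ge 0$ and $f(t)\to+\infty$ as $|t|\to+\infty$. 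It remains to pick $f$ so that the Laplacian inequality holds off $K$.

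Next I would establish the differential inequality for $u$. From the composition formula along an isometric immersion, $\Delta u=\sum_{i}\bar{\nabla}^{2}\varrho(e_{i},e_{i})+m\,\langle\bar{\nabla}\varrho,{\bf H}\rangle$ in an orthonormal frame $\{e_{i}\}$ of $TM$. Since $|\bar{\nabla}\varrho|\equiv 1$ on $\mathcal{U}$ we have $\bar{\nabla}^{2}\varrho(\bar{\nabla}\varrho,\cdot)=0$, so each summand only involves the component of $e_{i}$ orthogonal to $\bar{\nabla}\varrho$, and Theorem \ref{hessian} applies: on $\{u\ge 0\}$ it gives $\bar{\nabla}^{2}\varrho(e_{i},e_{i})\le\frac{h'}{h}(u)$, hence $\sum_{i}\bar{\nabla}^{2}\varrho(e_{i},e_{i})\le(m-1)\frac{h'}{h}(u)$, where $h$ solves $h''-Gh=0$ with $h(0)>0$ and $c_{0}:=\frac{h'(0)}{h(0)}\ge 0$ chosen (as in the hypothesis \eqref{comp-A2-a} of Theorem \ref{hessian}) so that $\bar{\nabla}^{2}\varrho|_{T\Sigma_{0}}\le c_{0}\langle\,,\,\rangle$; note $d^{*}=\infty$ for this $h$. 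Together with $m\langle\bar{\nabla}\varrho,{\bf H}\rangle\le m|{\bf H}|\le mB\sqrt{G(u)}$ from \eqref{HG2}, this yields $\Delta u\le(m-1)\frac{h'}{h}(u)+mB\sqrt{G(u)}$ on $\{u\ge 0\}$. The elementary fact that closes this step is $\frac{h'}{h}(t)\le c_{0}+\sqrt{G(t)}$ for all $t\ge 0$: writing $\psi=h'/h$, the Riccati equation $\psi'=G-\psi^{2}$ together with $G'\ge 0$ and $G(0)>0$ keeps $\psi$ nonnegative and below $\max\{c_{0},\sqrt{G}\}$ (where $\psi>\sqrt{G}$ it is strictly decreasing, where it would touch $\sqrt{G}$ it has nonnegative second derivative and cannot cross downward). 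Since $G(t)\ge G(0)>0$, we obtain $\frac{h'}{h}(t)\le(1+c_{0}G(0)^{-1/2})\sqrt{G(t)}$, hence $\Delta u\le C\sqrt{G(u)}$ on $\{u\ge 0\}$ with $C=C(m,c_{0},G(0),B)$; applying the reversed half of Theorem \ref{hessian} on $\{u\le 0\}$ (i.e. viewing $-\varrho$ as the signed distance to $\Sigma_{0}$ with opposite co-orientation) gives, after enlarging $C$, $\Delta u\ge -C\sqrt{G(|u|)}$ on $\{u\le 0\}$.

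Finally I would choose $f$. Because $G^{-1/2}\notin L^{1}(+\infty)$ by \eqref{G2}, the even function $f(t)=\exp\!\big(\lambda\int_{0}^{|t|}\tfrac{ds}{C\sqrt{G(s)}}\big)$ — smoothed on a small neighbourhood of $t=0$, which alters $\gamma$ only inside $K$ — is positive, smooth, and tends to $+\infty$ as $|t|\to+\infty$, so $\gamma:=f(u)$ is a genuine exhaustion of $M$. On $\{u>1\}$, using $f'>0$, the bound $f''\le\lambda^{2}G(0)^{-1}C^{-2}f$ (the part of $f''$ coming from $(1/\sqrt{G})'$ has the favourable sign because $G'\ge 0$), $|\nabla u|\le|\bar{\nabla}\varrho|=1$, and $\Delta u\le C\sqrt{G(u)}$, one gets
\begin{equation}
\Delta\gamma=f'(u)\,\Delta u+f''(u)\,|\nabla u|^{2}\le f(u)\Big(\lambda+\frac{\lambda^{2}}{C^{2}G(0)}\Big)=\Lambda\gamma ,
\end{equation}
and on $\{u<-1\}$ the same computation works since there $f'<0$, $f$ is even, and $\Delta u\ge -C\sqrt{G(|u|)}$. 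Thus $\Delta\gamma\le\Lambda\gamma$ off the compact set $K$, and the Khasminskii test gives the stochastic completeness of $M$.

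The step requiring the most care is the use of Theorem \ref{hessian} near $\Sigma_{0}$: its conclusion rests on the shape-operator hypothesis \eqref{comp-A2-a} (respectively \eqref{comp-reverse2}), so that for a single constant $C$ to work along every $\varrho$-geodesic one wants $\bar{\nabla}^{2}\varrho|_{T\Sigma_{0}}$ to be two-sided bounded on $\Sigma_{0}$ — automatic when $\Sigma_{0}$ is compact or of bounded geometry, which is the setting of \cite[Examples 1.13, 1.14]{prs-memoirs}. The remaining points — the smoothness and sign change of $\varrho$, both confined to the compact core $K$, and the routine verification that $\gamma$ satisfies the hypotheses of the Khasminskii test — are then immediate.
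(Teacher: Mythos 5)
Your proposal is correct, and its analytic core coincides with the paper's: both proofs hinge on the Hessian comparison theorem applied to $\varrho$ together with the bound $h'/h\le C\sqrt{G}$, which is exactly what turns the hypotheses \eqref{Krad2}--\eqref{G2} into the differential inequality $\Delta(\text{function of }\varrho\circ\varphi)\lesssim\sqrt{G}$. Where you diverge is in the packaging. The paper proves $h'/h\le D\sqrt{G}$ by exhibiting the explicit supersolution $\widetilde h=\frac{1}{D\sqrt{G(d_*)}}\bigl(e^{D\int_{d_*}^d\sqrt{G}}-1\bigr)$ and invoking Sturm comparison, then shows $\Delta(\varrho\circ\varphi)^2\le C\sqrt{G(\varrho)}\,\varrho$ off a compact set and concludes by declaring $(G,(\varrho\circ\varphi)^2)$ an Omori--Yau pair in the sense of Al\'ias--Bessa--Montenegro--Piccione, outsourcing the final step to that machinery. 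You instead run the Riccati equation $\psi'=G-\psi^2$ directly to get $\psi\le\max\{c_0,\sqrt{G}\}$ (an equivalent but more elementary argument), and then build the Khasminskii exhaustion $\gamma=\exp\bigl(\lambda\int_0^{|u|}\tfrac{ds}{C\sqrt{G}}\bigr)$ by hand --- which is essentially what the Omori--Yau-pair theorem does internally, so your proof is more self-contained and makes transparent where $G^{-1/2}\notin L^1(+\infty)$ enters (it is precisely what makes $\gamma$ an exhaustion). You also treat the region $\varrho<0$ explicitly by applying the comparison theorem to $-\varrho$, a point the paper's computation glosses over even though properness of $\varrho\circ\varphi$ allows escape to $-\infty$. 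Finally, both arguments need the shape operator of $\Sigma_0$ to admit a uniform bound so that a single $h$ satisfies \eqref{comp-A2-a} along every normal geodesic; the paper assumes this tacitly in its choice of initial data for $h$, whereas you flag it --- that is a hypothesis missing from the statement rather than a gap in your proof.
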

\begin{remark}It follows from Borb\'{e}ly's work \cite{borbely-Bull-Aus} that this criterion holds without the condition $\limsup_{t\to\infty}\frac{tG(t^{1/2})}{G(t)}<+\infty$, see also \cite[Thm.9]{bessa-pigola-setti-revista}.
\end{remark}
\begin{proof}\hspace{-1mm}:
Denote by  $\nabla$ and $\bar{\nabla}$ the Riemannian covariant derivatives in $M$ and $N$, respectively.
Consider the restriction $\varrho\circ\varphi$ of $\varrho$ to the hypersurface $\varphi(M)$. Hence we have
\begin{equation}
\Delta (\varrho\circ\varphi)^2 = \sum_{i=1}^m \langle \bar{\nabla}_{\varphi_* {\sf e}_i} \bar{\nabla} \varrho^2, \varphi_*{\sf e}_i\rangle +  \langle \bar{\nabla}\varrho^2, m{\bf H}\rangle,
\end{equation}
where $\{{\sf e}_i\}_{i=1}^m$ is a local orthonormal frame tangent to $M$ and ${\bf H}$ is the mean curvature vector field along $\varphi$. Theorem \ref{hessian}
implies that
\begin{eqnarray*}
\Delta (\varrho\circ\varphi)^2 &\le & 2\varrho \frac{h'}{h}\sum_{i=1}^m |\varphi_* {\sf e}_i^\perp|^2 + 2\varrho\sum_{i=1}^m\langle \bar{\nabla}\varrho, \varphi_*{\sf e}_i\rangle^2 +  2\varrho\langle \bar{\nabla}\varrho, m{\bf H}\rangle\\
& = & 2\varrho \frac{h'}{h}\sum_{i=1}^m |\varphi_* {\sf e}_i|^2 + 2\varrho\Big(1-\frac{h'}{h}\Big)\sum_{i=1}^m\langle \bar{\nabla}\varrho, \varphi_*{\sf e}_i\rangle^2 +  2\varrho\langle \bar{\nabla}\varrho, m{\bf H}\rangle
\end{eqnarray*}
where $\perp$ denotes the tangent projection on the equidistant hypersurfaces $\Sigma_d =\varrho^{-1}(d)$ and $h:[d_*, \infty)\to \mathbb{R}$ is the solution of the initial value problem
\begin{equation}
\label{hC}
h''-G h=0, \quad h(d_*)=0, \quad h'(d_*)=1,
\end{equation}
for some $d_*<0$, which satisfies (\ref{comp-A2-a}) at $d=0$.

Define
\begin{equation}
\widetilde h(d) = \frac{1}{D\sqrt{G(d_*)}}\Big(e^{D\int_{d_*}^d \sqrt{G(\tau)}\textrm{d}\tau}-1\Big).
\end{equation}
Hence $\widetilde h (d_*)=0,\, \widetilde h'(d_*)=1$ and
\[
\widetilde h''\ge \frac{G}{\sqrt{G(d_*)}}\Big(D+\frac{1}{2}\frac{G'}{G^{3/2}}\Big)e^{D\int_{d_*}^d \sqrt{G(\tau)}\textrm{d}\tau}\ge\frac{1}{D}\frac{G}{\sqrt{G(d_*)}}e^{D\int_{d_*}^d \sqrt{G(\tau)}\textrm{d}\tau}\ge G\widetilde h,
\]
for $D>0$ sufficiently large. It follows from the Sturm comparison theorem,  \cite[Lem. 2.1]{prs-vanishing} that $\widetilde h \ge h$ in $(d_*, d^*)$ and
\begin{eqnarray*}
\frac{h'}{h} \le \frac{\widetilde h'}{\widetilde h} \le D\sqrt{G}.
\end{eqnarray*}
in this interval. Therefore, there exists a compact subset $K\subset M$ such that
\begin{eqnarray*}
\Delta (\varrho\circ\varphi)^2 \le 2mD\sqrt{G(\varrho)}\varrho +  2m\varrho |{\bf H}|\le C\sqrt{G(\varrho)}\varrho
\end{eqnarray*}
holds in $M\backslash K$, for some constant $C>0$.
We conclude that $(G,(\varrho\circ\varphi)^2)$ is a Omori-Yau pair in $M$ (see \cite[Def.3.3]{alias-bessa-montenegro-piccione}
for a precise definition\footnote{Taking in account Borb\'{e}ly's work \cite{borbely-Bull-Aus}, may drop the condition $\limsup_{t\to\infty}\frac{tG(t^{1/2})}{G(t)}<+\infty$ in the definition of  the Omori-Yau pair.}). This implies that $M$ is stochastically complete. \hfill $\square$
\end{proof}

\section{A generalized mean curvature comparison principle}
In this section we prove mean curvature estimates for immersed submanifolds. The basic setting in all of the results from now on is as follows.
\label{sec:generalized-comparison}Let $\varphi\colon  M \to N$ be a proper isometric immersion of a complete Riemannian manifold $M$ into a complete Riemannian manifold $N$. Let   $\Sigma_0\subset N$ be a complete oriented embedded hypersurface such that   the signed distance $\varrho = \dist(\cdot, \Sigma_0)$ from $\Sigma_0$  is regular in a domain $\mathcal{U}\subset N$. Suppose that the radial sectional and mean curvatures of $N$ and $\varphi(M)$ along the geodesics issuing perpendicularly from $\Sigma_0$ satisfy the inequality \eqref{Krad2}.\footnote{If, in addition, the inequality \eqref{HG2} is satisfied then $M$ is stochastically complete. }  Assume that $\Sigma_{0}$ is such that the solution  $h:[d_*, \infty)\to \mathbb{R}$, $d_*<0$,  of the initial value problem \eqref{hC}  satisfies (\ref{comp-A2-a}) in $T\Sigma_0$.
Denote $f=g\circ\varrho\circ\varphi$ where
\begin{equation}
g(\varrho) = \int_{\varrho_0}^\varrho h(r)\, \textrm{d}r,
\end{equation}
for some $\varrho_0\ge d_*$. In this setting above,
the Laplacian of $f$ can be  related to the mean curvature $\vert {\bf H}\vert$ of the submanifold $\varphi(M)$  and to the mean curvature $H_d$ of  the equidistant hypersurface $\Sigma_d=\varrho^{-1}(d)$ with respect to $-\bar{\nabla}\varrho$ in an  expression  that will be used repeatedly in the sequel.

\begin{proposition}\label{mainformula}
Let $\varphi\colon M \to N$ be a proper isometric immersion of a  complete Riemannian manifold $M$ into a complete Riemannian manifold $N$. Let $\Sigma_0$ be a complete hypersurface in $N$ and consider the signed distance function $\varrho = \dist(\cdot, \Sigma_0)$. Suppose that  $\varphi(M)$ is contained in some domain $\mathcal{U}\subset N$ where $\varrho$ is a regular function. Suppose that $\inf_\mathcal{U}h>0$. If \emph{(\ref{comp-K2-a})} and \emph{(\ref{comp-A2-a})} in Theorem \ref{hessian} hold then
\begin{equation}
\label{Deltaf}
\frac{\Delta f}{h} \ge (n-1)H_d-(n-1)\frac{h'}{h}- m|{\bf H}| + m\frac{h'}{h}.
\end{equation}
Moreover $H_d\le\disp\frac{h'(d)}{h(d)}$ in $\mathcal{U}$.
\end{proposition}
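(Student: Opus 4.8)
The plan is to evaluate $\Delta f$ at an arbitrary point $p=\varphi(x)$ of $\varphi(M)\subset\mathcal{U}$ via the composition formula for the Laplacian under an isometric immersion, and then to feed in the Hessian bound supplied by Theorem \ref{hessian}. With $f=g\circ\varrho\circ\varphi$, $g'=h$, $g''=h'$, and a local orthonormal frame $\{{\sf e}_i\}_{i=1}^m$ of $M$, one obtains, exactly as in the computation preceding Theorem \ref{criterion2} (now with $u=g\circ\varrho$ in place of $\varrho^2$),
\begin{equation*}
\Delta f = h'(\varrho)\sum_{i=1}^m\langle\bar{\nabla}\varrho,\varphi_*{\sf e}_i\rangle^2 + h(\varrho)\sum_{i=1}^m\bar{\nabla}^2\varrho(\varphi_*{\sf e}_i,\varphi_*{\sf e}_i) + m\,h(\varrho)\,\langle\bar{\nabla}\varrho,{\bf H}\rangle .
\end{equation*}
Since $\inf_\mathcal{U}h>0$ we have $h(\varrho(p))>0$, so dividing by $h$ at the end is legitimate and, $h$ having no zero on the relevant range, the conclusion \eqref{comp-AA2} of Theorem \ref{hessian} is available along $\varphi(M)$.

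The key step is to replace the partial trace $\sum_i\bar{\nabla}^2\varrho(\varphi_*{\sf e}_i,\varphi_*{\sf e}_i)$ over $\varphi_*T_xM$ by the full trace of $\bar{\nabla}^2\varrho$ over $T\Sigma_d$. I complete $\{\varphi_*{\sf e}_i\}_{i=1}^m$ to an orthonormal basis $\{\varphi_*{\sf e}_1,\dots,\varphi_*{\sf e}_m,\nu_{m+1},\dots,\nu_n\}$ of $T_pN$ with the $\nu_\alpha$ normal to $\varphi(M)$. Because $\bar{\nabla}^2\varrho(\bar{\nabla}\varrho,\cdot)=0$ (the integral curves of $\bar{\nabla}\varrho$ are geodesics), the full trace of $\bar{\nabla}^2\varrho$ equals its trace over $T\Sigma_d$, which is $(n-1)H_d$ by the definition of the mean curvature $H_d$ of $\Sigma_d=\varrho^{-1}(d)$ with respect to $-\bar{\nabla}\varrho$; hence $\sum_i\bar{\nabla}^2\varrho(\varphi_*{\sf e}_i,\varphi_*{\sf e}_i)=(n-1)H_d-\sum_{\alpha}\bar{\nabla}^2\varrho(\nu_\alpha,\nu_\alpha)$. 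Projecting each $\nu_\alpha$ onto $T\Sigma_d=(\bar{\nabla}\varrho)^\perp$ and applying \eqref{comp-AA2} gives $\bar{\nabla}^2\varrho(\nu_\alpha,\nu_\alpha)\le \frac{h'}{h}\bigl(1-\langle\nu_\alpha,\bar{\nabla}\varrho\rangle^2\bigr)$, and summing over $\alpha$, together with the Pythagorean identity $\sum_\alpha\langle\nu_\alpha,\bar{\nabla}\varrho\rangle^2=1-\sum_i\langle\varphi_*{\sf e}_i,\bar{\nabla}\varrho\rangle^2$ for the splitting $T_pN=\varphi_*T_xM\oplus(\varphi_*T_xM)^\perp$, produces a lower bound for $\sum_i\bar{\nabla}^2\varrho(\varphi_*{\sf e}_i,\varphi_*{\sf e}_i)$ in which $\sum_i\langle\varphi_*{\sf e}_i,\bar{\nabla}\varrho\rangle^2$ reappears with coefficient $+h'/h$.

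Substituting back, multiplying that lower bound by $h>0$, the terms $h'\sum_i\langle\bar{\nabla}\varrho,\varphi_*{\sf e}_i\rangle^2$ coming from the $g''$ part of $\bar{\nabla}^2(g\circ\varrho)$ cancel precisely against the newly produced $-h'\sum_i\langle\bar{\nabla}\varrho,\varphi_*{\sf e}_i\rangle^2$, leaving $\Delta f\ge h'(m-n+1)+(n-1)h\,H_d+m\,h\,\langle\bar{\nabla}\varrho,{\bf H}\rangle$; dividing by $h$ and using $\langle\bar{\nabla}\varrho,{\bf H}\rangle\ge-|{\bf H}|$ (Cauchy--Schwarz, since $|\bar{\nabla}\varrho|=1$) yields \eqref{Deltaf}. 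The final assertion $H_d\le h'(d)/h(d)$ is then immediate: it is $\frac{1}{n-1}$ times the trace of \eqref{comp-AA2} over the $(n-1)$-dimensional space $T\Sigma_d$. I do not expect a serious obstacle here; the one point demanding care is the exact cancellation of the $\langle\bar{\nabla}\varrho,\varphi_*{\sf e}_i\rangle^2$ terms — it is what makes the coefficient of $h'/h$ on the right-hand side the clean value $m-(n-1)$ rather than a quantity measuring how tangent $\varphi(M)$ is to the equidistant hypersurfaces — together with the observation that only the upper bound \eqref{comp-AA2} is used, so no sign hypothesis on $h'$ enters the estimate.
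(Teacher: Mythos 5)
Your proposal is correct and follows essentially the same route as the paper: both pass from the tangential trace of $\bar{\nabla}^2\varrho$ to the full trace $(n-1)H_d$ minus the normal contributions, bound the latter by applying the Hessian comparison to the projections of the normal frame onto $T\Sigma_d$ (using $\bar{\nabla}^2\varrho(\bar{\nabla}\varrho,\cdot)=0$), and exploit exactly the cancellation of the $\sum_i\langle\bar{\nabla}\varrho,\varphi_*{\sf e}_i\rangle^2$ terms that you single out. The final Cauchy--Schwarz step and the derivation of $H_d\le h'/h$ as a trace of \eqref{comp-AA2} also coincide with the paper's argument.
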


\begin{proof}\hspace{-1mm}:
The Laplacian of $f$ is calculated as follows
\begin{eqnarray*}
\Delta  f &=& g' \Delta(\varrho\circ\varphi) + g''|\nabla (\varrho\circ\varphi)|^2 \\
& = & g'\sum_{i=1}^m \langle \bar{\nabla}_{\varphi_* {\sf e}_i} \bar{\nabla} \varrho, \varphi_*{\sf e}_i\rangle + g' \langle \bar\nabla\varrho, m{\bf H}\rangle + g''|\nabla (\varrho\circ\varphi)|^2,
\end{eqnarray*}
where $\{{\sf e}_i\}_{i=1}^m$ is a local orthonormal frame tangent to $M$ and ${\bf H}$ is the mean curvature vector field along $\varphi$.  We proceed considering a local orthonormal frame $\{{\sf v}_i\}_{i=1}^{n-1}$ adapted to an equidistant $\Sigma_d =\varrho^{-1}(d)$ which intersects $\varphi(M)$.  Denoting by $\{\xi_\alpha\}_{\alpha=1}^{n-m}$ a local orthonormal frame in the normal bundle of $\varphi$ and by $\perp$ the projection on $T\Sigma_d$ we obtain
\begin{eqnarray*}
\Delta f &=& g'\Delta_N \varrho - g'\sum_{\alpha=1}^{n-m}\langle \bar{\nabla}_{\xi_\alpha} \bar{\nabla}\varrho, \xi_\alpha\rangle + g' \langle \bar{\nabla}\varrho, m{\bf H}\rangle + g''|\nabla (\varrho\circ\varphi)|^2\\
& = &
 g'\sum_{i=1}^{n-1} \langle \bar{\nabla}_{{\sf v}_i} \bar{\nabla} \varrho, {\sf v}_i\rangle - g'\sum_{\alpha=1}^l\langle \bar{\nabla}_{\xi_\alpha} \bar{\nabla}\varrho, \xi_\alpha\rangle  + g' \langle \bar{\nabla}\varrho, m{\bf H}\rangle + g''\Big(1-\sum_{\alpha=1}^{n-m} \langle\bar{\nabla} \varrho, \xi_\alpha\rangle^2\Big)\\
& = & (n-1) H_d \,g'- g'\sum_{\alpha=1}^{n-m}\langle \bar{\nabla}_{\xi_\alpha^\perp} \bar{\nabla}\varrho, \xi_\alpha^\perp\rangle  + g' \langle \bar{\nabla}\varrho, m{\bf H}\rangle + g''\Big(1-\sum_{\alpha=1}^{n-m} \langle\bar\nabla \varrho, \xi_\alpha\rangle^2\Big),
\end{eqnarray*}
where $H_d$ is the mean curvature of $\Sigma_d$ with respect to $-\bar{\nabla}\varrho$.
It follows from Theorem \ref{hessian} that
\begin{eqnarray*}
\Delta f & \ge &  (n-1) H_d \,g'- g'\frac{h'}{h}\sum_{\alpha=1}^{n-m}|{\xi_\alpha^\perp}|^2 + g' \langle \bar{\nabla}\varrho, m{\bf H}\rangle + g''\Big(1-\sum_{\alpha=1}^{n-m} \langle\bar{\nabla} \varrho, \xi_\alpha\rangle^2\Big)\\
& = & (n-1) H_d \,h- h'\sum_{\alpha=1}^{n-m}|{\xi_\alpha^\perp}|^2 + h \langle \bar{\nabla}\varrho, m{\bf H}\rangle + h'\Big(\sum_{\alpha=1}^{n-m}|{\xi_\alpha^\perp}|^2-n+m+1\Big)\\
& \ge  &  h \big((n-1)H_d - m|{\bf H}|) - (n-1-m)h'.
\end{eqnarray*}
We conclude that
\begin{equation}
\frac{\Delta f}{h} \ge (n-1)H_d-(n-1)\frac{h'}{h}- m|{\bf H}| + m\frac{h'}{h}.
\end{equation}
Notice that the Hessian comparison theorem implies that $H_d \le h'(d)/h(d)$. This finishes the proof of the proposition \hfill $\square$.
\end{proof}

\begin{remark} Some of these calculations above was also done  in \cite{bessa-pigola-setti-pota}. The inequality \eqref{Deltaf} is effective if the parallel hypersurfaces have mean curvature $H_d=h'(d)/h(d)$. Otherwise, suppose that (\ref{comp-reverse1}) and \eqref{comp-reverse2} in Theorem \ref{hessian} hold.
It follows from the proof of Proposition \ref{mainformula} that
\begin{equation}
\label{mainformula-reverse}
\frac{\Delta f}{h} \ge m \Big(\frac{h'}{h}-|{\bf H}|\Big).
\end{equation}
Indeed
\begin{eqnarray*}
\Delta f & = & g'\sum_{i=1}^m \langle \bar{\nabla}_{\varphi_* {\sf e}_i} \bar{\nabla} \varrho, \varphi_*{\sf e}_i\rangle + g' \langle \bar\nabla\varrho, m{\bf H}\rangle + g''|\nabla (\varrho\circ\varphi)|^2\\
& = & g'\sum_{i=1}^m \langle \bar{\nabla}_{(\varphi_* {\sf e}_i)^\perp} \bar{\nabla} \varrho, (\varphi_*{\sf e}_i)^\perp\rangle + g' \langle \bar\nabla\varrho, m{\bf H}\rangle + g''|\nabla (\varrho\circ\varphi)|^2.
\end{eqnarray*}
Hence Theorem \ref{hessian} implies
\begin{eqnarray*}
\Delta f & \ge & g' \frac{h'}{h}\sum_{i=1}^m |(\varphi_* {\sf e}_i)^\perp|^2 + g' \langle \bar\nabla\varrho, m{\bf H}\rangle + g''|\nabla (\varrho\circ\varphi)|^2\\
& = & h'\sum_{i=1}^m |(\varphi_* {\sf e}_i)^\perp|^2 + h \langle \bar\nabla\varrho, m{\bf H}\rangle + h'|\nabla (\varrho\circ\varphi)|^2\\
& = & h'\big(m-|\nabla (\varrho\circ\varphi)|^2\big)+ h \langle \bar\nabla\varrho, m{\bf H}\rangle + h'|\nabla (\varrho\circ\varphi)|^2,
\end{eqnarray*}
what proves (\ref{mainformula-reverse}).
\end{remark}

The weak maximum principle has the following consequence which is easily derived from inequality (\ref{mainformula-reverse}).

\begin{theorem}\label{thm4} Let $M$ be a geodesically and stochastically complete Riemannian manifold and
let $\varphi\colon M \to N$ be an isometric immersion of  $M$ into a complete Riemannian manifold $N$. Let $\Sigma_0$ be an oriented, complete hypersurface in $N$ and consider the signed distance function $\varrho = \dist(\cdot, \Sigma_0)$. Suppose that  $\varphi(M)$ is contained in some domain $\mathcal{U}\subset N$ where $\varrho$ is a regular function. Suppose that $\inf_\mathcal{U}h>0$ and that $\varphi(M)\subset \varrho^{-1}((-\infty, d))$ for some $d<+\infty$. If \emph{(\ref{comp-reverse1})} and \emph{(\ref{comp-reverse2})} in Theorem \ref{hessian}  hold then
\begin{equation}
\label{H-est}
\sup_M |{\bf H}|\ge\inf_{(d_*, d^*)}\frac{h'}{h}.
\end{equation}
\end{theorem}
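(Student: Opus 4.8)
The plan is to apply the weak maximum principle at infinity (available because $M$ is stochastically complete) to the bounded-above function $f = g\circ\varrho\circ\varphi$, where $g(\varrho) = \int_{\varrho_0}^{\varrho} h(r)\,\di r$ with $\varrho_0$ fixed in $(d_*,d^*)$. First I would check that $f^* = \sup_M f < +\infty$: since $\varphi(M)\subset\varrho^{-1}((-\infty,d))$ with $d < d^*$ and $\inf_{\mathcal U} h > 0$, the integrand $h$ is positive and bounded on the relevant range, so $g\circ\varrho$ is bounded above on $\varphi(M)$, hence $f^*<+\infty$. Then by \eqref{wmp} there is a sequence $\{x_k\}\subset M$ with $f(x_k) > f^* - 1/k$ and $\Delta f(x_k) < 1/k$.

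Next I would invoke the inequality \eqref{mainformula-reverse} from the remark following Proposition \ref{mainformula}, which holds precisely under hypotheses \eqref{comp-reverse1} and \eqref{comp-reverse2}: at each point of $M$,
\[
\frac{\Delta f}{h\circ\varrho\circ\varphi} \ge m\Big(\frac{h'}{h}\circ\varrho\circ\varphi - |{\bf H}|\Big).
\]
Evaluating along the sequence and using $\Delta f(x_k) < 1/k$ together with $h>0$ gives
\[
\frac{1}{k\, (h\circ\varrho\circ\varphi)(x_k)} > m\Big(\frac{h'}{h}\circ\varrho\circ\varphi - |{\bf H}|\Big)(x_k),
\]
so $|{\bf H}|(x_k) > \frac{h'}{h}(\varrho(\varphi(x_k))) - \frac{1}{k\, m\, (h\circ\varrho\circ\varphi)(x_k)}$. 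Since $h$ is bounded away from $0$ on the range of $\varrho\circ\varphi$, the correction term tends to $0$, and therefore
\[
\sup_M|{\bf H}| \ge \limsup_{k\to\infty}\Big(\frac{h'}{h}(\varrho(\varphi(x_k))) - o(1)\Big) \ge \inf_{(d_*,d^*)}\frac{h'}{h}.
\]

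The point requiring a little care is ensuring the correction term really is controlled, i.e. that $h\circ\varrho\circ\varphi$ stays bounded below along $\{x_k\}$: this is exactly the role of the hypothesis $\inf_{\mathcal U} h > 0$, since $\varphi(M)\subset\mathcal U$ and the $x_k$ lie in $M$. One subtlety is that the weak maximum principle only controls $\Delta f$ from above, which is why one needs the one-sided estimate \eqref{mainformula-reverse} (lower bound on $\Delta f$) rather than the two-sided control; this matches the orientation of the curvature and second-fundamental-form assumptions \eqref{comp-reverse1}--\eqref{comp-reverse2}. I expect the main (minor) obstacle to be verifying $f^*<+\infty$ and the boundedness of $h$ along the maximizing sequence cleanly from the stated hypotheses; once those are in hand the argument is a direct substitution into \eqref{mainformula-reverse} and a limit.
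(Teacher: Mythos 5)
Your proposal is correct and follows exactly the paper's argument: the paper's proof is precisely to note that $f=g\circ\varrho\circ\varphi$ is bounded above and to combine the weak maximum principle \eqref{wmp} with inequality \eqref{mainformula-reverse}. You simply spell out the limit argument and the role of $\inf_{\mathcal U}h>0$ in more detail, which is a faithful elaboration rather than a different route.
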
\begin{remark}If in Theorem \ref{thm4} we substitute $N$ by $N\times L$, $\Sigma_0$ by $\Sigma \times L$, $\varrho$ by $\widetilde \varrho$, where $L$ is a complete Riemannian manifold we have Theorem \ref{thm-product-intro}. \end{remark}

\begin{proof}\hspace{-1mm}: By hypothesis, $f$ is bounded above in $M$. The conclusion follows from (\ref{wmp}) applied to $f$ and from (\ref{mainformula-reverse}). \hfill $\square$
\end{proof}

\begin{remark}To obtain mean curvature  estimates of submanifolds via Proposition \ref{mainformula} one needs to choose judiciously a hypersurface $\Sigma_0$ satisfying all the requirements pointed above. In this next section we present several situations where we can choose   $\Sigma_0$ yielding good estimates.
\end{remark}

\begin{theorem}
\label{hypersurface}Let $\varphi\colon M \to N$ be a proper isometric immersion of a  complete Riemannian manifold $M$ into a complete Riemannian manifold $N$ with codimension $1$. Let $\Sigma_0$ be an oriented complete hypersurface in $N$ and  $\varrho = \dist(\cdot, \Sigma_0)$ the signed distance function. Suppose that  $\varphi(M)$ is contained in some domain $\mathcal{U}\subset N$ where $\varrho$ is a regular function.  Suppose that  the radial sectional and mean curvatures of $N$ and $\varphi(M)$ along the geodesics issuing perpendicularly from $\Sigma_0$ satisfies the inequalities \eqref{Krad2} and \eqref{HG2} and that
$\inf_\mathcal{U}h>0$.
 If $\varphi(M)\subset \varrho^{-1}((-\infty,d])$ for some $d>0$ and  $\varrho(\varphi(x_0))\ge 0$ for some $x_0\in M$ then
\begin{equation}
\sup_{M}|{\bf H}| \ge \inf_{\mathcal{U}}H_d.
\end{equation}
In particular, there are no stochastically complete hypersurfaces in $\varrho^{-1}((-\infty, d])$ satisfying $|{\bf H}|< \inf_\mathcal{U}H_d$.
\end{theorem}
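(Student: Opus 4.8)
The plan is to apply the weak maximum principle at infinity to the function $f = g\circ\varrho\circ\varphi$, exactly as in the proof of Theorem~\ref{thm4}, but now extracting the conclusion from the full inequality \eqref{Deltaf} of Proposition~\ref{mainformula} rather than from the reduced inequality \eqref{mainformula-reverse}. Since the codimension is $1$ we have $m = n-1$, so the two ``curvature-of-the-ambient-leaf'' terms in \eqref{Deltaf} combine favourably. First I would check the hypotheses of Proposition~\ref{mainformula}: $\varphi(M)\subset\mathcal{U}$ with $\varrho$ regular there, $\inf_{\mathcal{U}}h>0$, and the curvature assumption \eqref{Krad2} together with \eqref{comp-A2-a}/\eqref{comp-reverse2} at $T\Sigma_0$ coming from the standing setup of Section~\ref{sec:generalized-comparison}. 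I also need that $M$ is stochastically complete; this is guaranteed because \eqref{Krad2} and \eqref{HG2} hold, by Theorem~\ref{criterion2} (the footnote in the standing setup). The bound $\varphi(M)\subset\varrho^{-1}((-\infty,d])$ makes $\varrho\circ\varphi$ bounded above, and since $g' = h > 0$ the function $g$ is increasing, so $f = g\circ\varrho\circ\varphi$ is bounded above on $M$; moreover $f(x_0)\ge g(0)$-type information from $\varrho(\varphi(x_0))\ge 0$ is available if needed to locate things, though for the $\sup$ estimate boundedness above suffices.

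Next, with $n-1 = m$, inequality \eqref{Deltaf} reads
\begin{equation*}
\frac{\Delta f}{h} \ge m\,H_d - m\,\frac{h'}{h} - m\,|{\bf H}| + m\,\frac{h'}{h} = m\big(H_d - |{\bf H}|\big),
\end{equation*}
where $H_d$ is the mean curvature of the equidistant hypersurface $\Sigma_d = \varrho^{-1}(d)$ (with $d = \varrho(x)$ at the relevant point) with respect to $-\bar\nabla\varrho$. Thus along a sequence $\{x_k\}$ furnished by the weak maximum principle \eqref{wmp} applied to $f$ we get $\Delta f(x_k) < 1/k$, hence
\begin{equation*}
\frac{1}{k\,h(\varrho(\varphi(x_k)))} > \frac{\Delta f(x_k)}{h(\varrho(\varphi(x_k)))} \ge m\big(H_{\varrho(\varphi(x_k))}(\varphi(x_k)) - |{\bf H}|(\varphi(x_k))\big).
\end{equation*}
Since $\inf_{\mathcal{U}}h > 0$, the left side tends to $0$, so $\limsup_k\big(H_{\varrho(\varphi(x_k))} - |{\bf H}|(\varphi(x_k))\big)\le 0$, which gives $|{\bf H}|(\varphi(x_k)) \ge H_{\varrho(\varphi(x_k))} - o(1) \ge \inf_{\mathcal{U}}H_d - o(1)$, and therefore $\sup_M|{\bf H}| \ge \inf_{\mathcal{U}}H_d$. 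The ``in particular'' statement is immediate: a stochastically complete hypersurface in $\varrho^{-1}((-\infty,d])$ with $|{\bf H}| < \inf_{\mathcal{U}}H_d$ everywhere would contradict this estimate.

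The main obstacle is bookkeeping rather than conceptual: one must be careful that the quantity $H_d$ appearing in \eqref{Deltaf} is evaluated at the level $d = \varrho(\varphi(x_k))$ of the equidistant hypersurface passing through $\varphi(x_k)$, not at the fixed endpoint $d$ of the bounding slab, so the inequality $H_{\varrho(\varphi(x_k))} \ge \inf_{\mathcal{U}}H_d$ must be read as an infimum over all equidistant hypersurfaces meeting $\varphi(M)$ — which is exactly what $\inf_{\mathcal{U}} H_d$ denotes. A secondary point is that Proposition~\ref{mainformula} is stated under the ``direct'' curvature inequalities \eqref{comp-K2-a}, \eqref{comp-A2-a}, which here are precisely \eqref{Krad2} and the standing assumption that the solution $h$ of \eqref{hC} satisfies \eqref{comp-A2-a} on $T\Sigma_0$; I would note explicitly that these are the hypotheses in force, so the proposition applies verbatim. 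Everything else is a routine application of \eqref{wmp} together with $\inf_{\mathcal{U}}h>0$.
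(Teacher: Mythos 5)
Your strategy is the paper's: apply the weak maximum principle to $f=g\circ\varrho\circ\varphi$, feed the resulting sequence into \eqref{Deltaf}, and use $m=n-1$ to cancel the two $\frac{h'}{h}$ terms. The computation and the bookkeeping about $H_d$ being evaluated at the level $\varrho(\varphi(x_k))$ are both right. But there is one genuine gap, precisely where you wave it away: you assert that ``for the $\sup$ estimate boundedness above suffices'' and that the hypothesis $\varrho(\varphi(x_0))\ge 0$ is only ``available if needed to locate things.'' It \emph{is} needed, and locating the sequence is not optional. The inequality \eqref{Deltaf} rests on the Hessian comparison of Theorem \ref{hessian}, whose conclusion \eqref{comp-AA2} is asserted only for $0\le\varrho(p)<d^*$; nothing in Proposition \ref{mainformula} controls $\Delta f$ at points where $\varrho\circ\varphi<0$ (behind $\Sigma_0$ the equidistants need not satisfy any useful curvature bound). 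So before you may write $\Delta f(x_k)/h\ge m\big(H_d-|{\bf H}|\big)$ you must know that the weak-maximum-principle sequence eventually lies in $(\varrho\circ\varphi)^{-1}([0,d])$. The paper secures this with a Borb\'ely-type truncation: it replaces $f$ by the function $f_d$ that is constant on $(\varrho\circ\varphi)^{-1}((-\infty,0])$, notes $f_d(x_0)\ge f_d(x)$ there, and thereby forces the maximizing sequence into $(\varrho\circ\varphi)^{-1}((0,d))$, where $f_d$ coincides with $f$ and is smooth.

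Your untruncated route can be repaired with one observation you already have in hand: since $g'=h>0$, $g$ is strictly increasing, and $f^*\ge f(x_0)=g\big(\varrho(\varphi(x_0))\big)\ge g(0)$. Hence $f(x_k)>f^*-\frac1k$ forces $\varrho(\varphi(x_k))>g^{-1}\big(g(0)-\frac1k\big)\to 0$; if $f^*>g(0)$ the sequence eventually satisfies $\varrho\circ\varphi>0$, and in the degenerate case $f^*=g(0)$ the supremum is attained at $x_0$ with $\varrho=0$, where \eqref{comp-A2-a} itself supplies the Hessian bound. Add that sentence and your argument closes; without it, the claim that the location of the sequence is irrelevant is false.
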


\begin{proof}\hspace{-1mm}: We will apply a  localization procedure inspired by the techniques that A. Borb\'ely introduced in \cite{borbely-Bull-Aus}.  By assumption there exists $d\in (0,\infty)$ such that $\varrho\circ\varphi(x_0)\in [0,d]$ for some $x_0\in M$. Denote $f=g\circ\varrho\circ\varphi$ where
\begin{equation}
g(\varrho) = \int_{\varrho_0}^\varrho h(r)\, \textrm{d}r,
\end{equation} $\rho_0\geq d_{\ast}$.  Define $g_d\colon \mathcal{U}\subset N \to\mathbb{R}$ by
\begin{equation}
\tilde g_d(p)=\left\{\begin{array}{cl}
0, & \mbox{ if } \varrho(p)\in (-\infty, 0], \\
g(\varrho(p)), & \mbox{ if } \varrho(p)\in [0, d],\\
		   d, & \mbox{ if } \varrho(p)\in (d,+\infty]
              \end{array}\right.
\end{equation}
where $\varrho(p)={\rm dist}(p,\Sigma_0)$.
Hence the function $f_d = g_d\circ \varphi\colon M\to
\mathbb{R}$ is continuous, bounded above and smooth in
$(\varrho\circ\varphi)^{-1}(0,d)$. We may assume that $\sup_{M}\vert {\bf H}\vert < \infty$. In this situation, by
Theorem \ref{criterion2}, $M$ is stochastically complete and
there exists a sequence $x_k \in M$ such that $f_d(x_k)\to f_d^{\ast}$
and if $f_d$ is of class $C^{2}$ in a neighborhood of $x_k$ then
$\Delta  f_d(x_k)\leq 1/k$. However, $f(x_0)=g(\varrho\circ\varphi(x_0))\geq
f(x)$, for all $x\in (\varrho\circ\varphi)^{-1}(-\infty, 0)$. Therefore, we may
assume that  $x_{k}\in (\varrho\circ\varphi)^{-1}(0,d)$, where $f_d$ is smooth and coincides with $f$. Hence
applying  (\ref{Deltaf}), we get
$$
(n-1)|{\bf H}| +\frac{1}{kh}>  (n-1)H_d
$$
at the points of the sequence $\{x_k\}$ in $(\varrho\circ\varphi)^{-1}(0,d)$. Letting $k\to +\infty$ we have that $$\sup_{M}|{\bf H}|\geq \inf_{\mathcal{U}}  H_d.$$  This finishes the proof. \hfill $\square$\end{proof}

A sharp estimate also valid for higher codimension may be obtained if we explore some relation between the geometries of the ambient space $N$ and of the model space (see Remark \ref{geometries}). This is the case of the following result which jointly with the recent work by L. Mazet on the {\em vertical} half-space theorem in $E^3(-1,\tau)$, see \cite{mazet}, completes the picture in this setting of homogeneous three-dimensional geometries.

\begin{theorem}\label{submersion}
Let $\varphi\colon M \to N$ be a proper isometric immersion of a  complete Riemannian manifold $M$ into a complete Riemannian manifold $N$.
Let $\widetilde \Sigma_0$ be a  complete Riemannian manifold and suppose that there exists a Riemannian submersion $\pi:N\to \widetilde N$ with base space $\widetilde N=(d_*, d^*)\times_h \widetilde \Sigma_0$, where the warping function is a solution  $h:(d_*, d^*)\to \mathbb{R}$ of the ODE
\[
h''-Gh=0
\]
with $G$ satisfying conditions {\rm (\ref{G2})} and $h>0$ in $d_*<0< d^*$. Denote $\Sigma_0=\pi^{-1}(\widetilde\Sigma_0)$  and consider the signed distance function $\varrho = \dist(\cdot, \Sigma_0)$ in $N$.  We assume that the second fundamental form $A_0$ of $\Sigma_0$ satisfies $A_0\le \frac{h'(0)}{h(0)}g|_{T\Sigma_0}$.

Suppose that  $\varphi(M)$ is contained in some domain $\mathcal{U}\subset N$ where $\varrho$ is a regular function. Suppose that {\rm (\ref{Krad2})} and {\rm (\ref{HG2})} hold and that
$\inf_\mathcal{U}h>0$. If we have  $\varphi(M)\subset \varrho^{-1}((-\infty,d])$ for some $d>0$ and  $\varrho(\varphi(x_0))\ge 0$ for some $x_0\in M$ then
\begin{equation}
|{\bf H}| \ge \frac{m-1}{m}\inf_{\mathcal{U}}\frac{h'}{h} + \kappa,
\end{equation}
where $\kappa$ is the geodesic curvature of the vertical fibers of $\pi$.  In particular, there are no stochastically complete hypersurfaces in $\varrho^{-1}((-\infty, d])$ satisfying $|{\bf H}|<\disp \frac{m-1}{m}\inf_\mathcal{U}\frac{h'}{h}+\kappa$.\end{theorem}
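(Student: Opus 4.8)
\noindent The plan is to apply Proposition~\ref{mainformula}. Its hypotheses \eqref{comp-K2-a} and \eqref{comp-A2-a} are met here: \eqref{comp-A2-a} is exactly the assumption $A_0\le\frac{h'(0)}{h(0)}g|_{T\Sigma_0}$, and \eqref{comp-K2-a} is \eqref{Krad2}. So, with $f=g\circ\varrho\circ\varphi$ and $g(\varrho)=\int_0^\varrho h(r)\,\di r$,
\[
\frac{\Delta f}{h}\ \ge\ (n-1)H_d-(n-1)\frac{h'}{h}+m\frac{h'}{h}-m|{\bf H}|,
\]
$H_d$ being the mean curvature of the equidistant $\Sigma_d=\varrho^{-1}(d)$ with respect to $-\bar\nabla\varrho$. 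The genuinely new point is to evaluate $H_d$ explicitly from the submersion structure; once that is done, the conclusion follows from the weak maximum principle \eqref{wmp} — available because \eqref{Krad2} and \eqref{HG2} make $M$ stochastically complete by Theorem~\ref{criterion2}, so that we may assume $\sup_M|{\bf H}|<\infty$ — through the Borb\'ely-type cut-off used in the proof of Theorem~\ref{hypersurface}.

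\emph{Step 1 (computing $H_d$ from $\pi$).} First I would verify that $\varrho$ is a basic function, $\varrho=\widetilde\varrho\circ\pi$, where $\widetilde\varrho=\dist\big(\cdot,\{0\}\times\widetilde\Sigma_0\big)$ is the signed distance in the base $\widetilde N=(d_*,d^*)\times_h\widetilde\Sigma_0$: a unit-speed geodesic of $N$ leaving $\Sigma_0=\pi^{-1}(\{0\}\times\widetilde\Sigma_0)$ orthogonally has horizontal velocity lying over the unit radial field of the warped product, hence stays horizontal, is carried by $\pi$ to a geodesic of the same length, and inside the focal-point-free domain $\mathcal U$ it realizes $\dist(\cdot,\Sigma_0)$; thus $\varrho=\widetilde\varrho\circ\pi$ and $\bar\nabla\varrho$ is the horizontal lift of that radial field. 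Then, by the standard formula for the Laplacian of a basic function under a Riemannian submersion,
\[
\Delta_N\varrho=(\Delta_{\widetilde N}\widetilde\varrho)\circ\pi-\Big\langle\sum_j\mathcal T_{V_j}V_j,\ \bar\nabla\varrho\Big\rangle,
\]
where $\{V_j\}_{j=1}^{k}$ is a vertical orthonormal frame, $k$ the fibre dimension, and $\mathcal T$ the second fundamental form of the fibres — the O'Neill integrability tensor $\mathcal A$ of $\pi$ does not contribute to this trace. The first term is $\dim\widetilde\Sigma_0\cdot\frac{h'(\varrho)}{h(\varrho)}$ by the warped-product identity for $\bar\nabla^2\widetilde\varrho$ (here $h$ is the given solution of $h''-Gh=0$, and $A_0\le\frac{h'(0)}{h(0)}g|_{T\Sigma_0}$ records the matching of this model with $\Sigma_0$ at $\varrho=0$); the second is $k\kappa$, with $\kappa$ the geodesic curvature of the fibres of $\pi$. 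Since $\Delta_N\varrho=(n-1)H_d$ and $\dim\widetilde\Sigma_0+k=n-1$, this yields $(n-1)H_d=(n-1-k)\frac{h'}{h}+k\kappa$.

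\emph{Step 2 (the estimate).} Substituting this into the inequality of Proposition~\ref{mainformula} and simplifying, I obtain, on $(\varrho\circ\varphi)^{-1}(0,d)$,
\[
\frac{\Delta f}{h}\ \ge\ m\Big(\tfrac{m-1}{m}\,\tfrac{h'}{h}+\kappa-|{\bf H}|\Big).
\]

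\emph{Step 3 (localization) and the main obstacle.} Exactly as in the proof of Theorem~\ref{hypersurface}, I would set
\[
\tilde g_d(p)=\begin{cases}0,&\varrho(p)\le0,\\ g(\varrho(p)),&0\le\varrho(p)\le d,\\ g(d),&\varrho(p)\ge d,\end{cases}\qquad f_d=\tilde g_d\circ\varphi,
\]
so that $f_d$ is continuous, bounded above, and smooth and equal to $f$ on $(\varrho\circ\varphi)^{-1}(0,d)$. Since $\varrho(\varphi(x_0))\ge0$ for some $x_0$, the supremum of $f_d$ is not approached on $(\varrho\circ\varphi)^{-1}(-\infty,0)$, so \eqref{wmp} applied to $f_d$ produces a sequence $x_k\in(\varrho\circ\varphi)^{-1}(0,d)$ with $f_d(x_k)\to\sup_M f_d$ and $\Delta f_d(x_k)<1/k$; substituting it into the inequality of Step~2 and letting $k\to\infty$ gives $\sup_M|{\bf H}|\ge\frac{m-1}{m}\inf_{\mathcal U}\frac{h'}{h}+\kappa$, the final statement being its contrapositive. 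I expect the only delicate point to be Step~1: proving rigorously that $\varrho$ is basic — which uses the no-focal-point regularity of $\mathcal U$, so that the orthogonal geodesics from $\Sigma_0$ remain minimizing and horizontal under $\pi$ — and then reading $H_d$ off the submersion Laplacian identity, in particular checking that the non-integrability of $\pi$ contributes nothing to $\Delta_N\varrho$ and correctly identifying the vertical trace with $\kappa$. With Step~1 in hand, Steps~2 and 3 are the substitution-plus-cut-off-plus-weak-maximum-principle routine already carried out for Theorem~\ref{hypersurface}.
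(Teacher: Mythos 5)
Your proposal follows essentially the same route as the paper: identify $\varrho=\widetilde\varrho\circ\pi$ as a basic function, compute $(n-1)H_d$ by separating the horizontal contribution (which projects to $\widetilde H_d=h'/h$ on the warped-product base) from the vertical one (the trace of the fibres' second fundamental form against $\bar\nabla\varrho$, i.e.\ $\kappa$), substitute into \eqref{Deltaf}, and close with the Borb\'ely-type truncation and the weak maximum principle exactly as in Theorem~\ref{hypersurface}. The paper does the $H_d$ computation with an adapted frame of $n-2$ basic fields plus one vertical field (so it implicitly takes one-dimensional fibres, consistent with ``geodesic curvature''), whereas you invoke the submersion formula for the Laplacian of a basic function with the O'Neill tensor $\mathcal T$; these are the same computation, and your version is the more carefully justified one (basicness of $\varrho$, vanishing of the $\mathcal A$-contribution, general fibre dimension $k$).

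One discrepancy to fix: your Step~2 does not follow from your Step~1. From $(n-1)H_d=(n-1-k)\frac{h'}{h}+k\kappa$ and \eqref{Deltaf} you get $\frac{\Delta f}{h}\ge (m-k)\frac{h'}{h}+k\kappa-m|{\bf H}|$, i.e.\ for $k=1$ the conclusion $\sup_M|{\bf H}|\ge \frac{m-1}{m}\inf_{\mathcal U}\frac{h'}{h}+\frac{\kappa}{m}$, with $\kappa/m$ rather than $\kappa$. This is exactly what the paper's own proof derives ($m|{\bf H}|+\frac{1}{kh}>(m-1)\frac{h'}{h}+\kappa$); the $+\kappa$ in the theorem's statement is inconsistent with its proof and appears to be a misprint — compare Theorem~\ref{submersion-hyp}, where the analogous vertical term correctly carries the factor $\ell/m$. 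Since $\kappa$ can have either sign, the two normalizations are not interchangeable, so you should state and prove the $\kappa/m$ version rather than reverse-engineer the printed constant.
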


\begin{proof}\hspace{-1mm}:
Since $\pi$ is a Riemannian submersion and $\Sigma_0 =\pi^{-1}(\widetilde\Sigma_0)$ the corresponding equidistant hypersurfaces in $N$ and $\widetilde N$ are also related by $\Sigma_d =\pi^{-1}(\widetilde\Sigma_d)$. In other terms,  $\varrho=\widetilde\varrho\circ\pi$, where $\widetilde\varrho=\dist_{\widetilde N}(\cdot, \widetilde\Sigma_0)$.

If we consider in $\Sigma_0$ a local orthonormal frame $\{{\sf v}_i\}_{i=1}^{n-1}$ such that the first $n-2$ vector fields are basic we have
\begin{eqnarray*}
(n-1)H_d &=& \sum_{i=1}^{n-2}\langle \bar\nabla_{{\sf v}_i}\bar\nabla \varrho, {\sf v}_i\rangle + \langle \bar\nabla_{{\sf v}_{n-1}}\bar\nabla \varrho, {\sf v}_{n-1}\rangle\\
&=& \sum_{i=1}^{n-2}\langle \nabla^{\widetilde N}_{\pi_*{\sf v}_i}\nabla^{\widetilde N} \widetilde\varrho, \pi_*{\sf v}_i\rangle+ \langle \bar\nabla_{{\sf v}_{n-1}}\bar\nabla \varrho, {\sf v}_{n-1}\rangle\\
& = & (n-2)\widetilde H_d  + \langle \bar\nabla_{{\sf v}_{n-1}}\bar\nabla \varrho, {\sf v}_{n-1}\rangle,
\end{eqnarray*}
where $\nabla^{\widetilde N}$ is the Riemannnian covariant derivative in $\widetilde N$ and $\widetilde H_d$ is the mean curvature of $\widetilde \Sigma_d$ with respect to $-\nabla^{\widetilde N}\widetilde \varrho$.  However by construction we have
\[
\widetilde H_d = \frac{h'(d)}{h(d)}.
\]
Hence defining a function $f_d$ as in the proof of Theorem \ref{hypersurface} and using (\ref{Deltaf}) we get
\begin{equation}
m|{\bf H}| +\frac{1}{kh} > (m-1)\frac{h'}{h}+\kappa,
\end{equation}
where
\[
\kappa = \inf_{{\sf v}}\langle \bar\nabla_{{\sf v}}\bar\nabla\varrho, {{\sf v}}\rangle.
\]
the infimum being taken over the vertical tangent vectors ${\sf v}$ in $T\mathcal{U}$.  This finishes the proof. \hfill $\square$  \end{proof}

\begin{theorem}\label{submersion-hyp} Let $N$ be a {\rm $(n+\ell)$}-dimensional complete Riemannian manifold which is a total space of a Riemannian submersion $\pi\colon N^{n+\ell}\to \mathbb{H}^n$. Suppose that the sectional curvatures of $N$ satisfy $K_N\ge -1$ everywhere.
Let $\varphi\colon M \to N$ be a proper isometric immersion of a  complete Riemannian manifold $M$ into  $N$ with bounded mean curvature $|{\bf H}|$.
Let $\widetilde\Sigma_0$ be a horosphere in  $\mathbb{H}^n$ and denote $\Sigma_0=\pi^{-1}(\widetilde \Sigma_0)$.  Suppose that the second fundamental form  $A_0$ of  $\Sigma_0$ satisfies $A_0 \le g|_{T\Sigma_0}$.
Suppose  that $\varphi(M)$ is contained in one of the connected components of $N\backslash \Sigma$ which we denote by $\mathcal{C}$.
Then
\begin{equation}
|{\bf H}| \ge \frac{m-\ell}{m}+ \frac{\ell}{m} \inf_{\mathcal{C}}\kappa,
\end{equation}
where $\kappa$ is the mean curvature of the vertical fibers of $\pi$ with respect to $\bar\nabla\varrho$.  In particular, there are no stochastically complete hypersurfaces in $\mathcal{C}$ satisfying $|{\bf H}|< \disp\frac{m-\ell}{m}+\frac{\ell}{m}\kappa$.\end{theorem}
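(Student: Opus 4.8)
The strategy is to reduce Theorem \ref{submersion-hyp} to the situation already handled in Theorem \ref{submersion}, by recognizing that the horosphere setting in $\mathbb{H}^n$ is precisely the model $\widetilde N = (d_*,d^*)\times_h \widetilde\Sigma_0$ with the exponential warping function. First I would recall that the distance function $\widetilde\varrho = \dist_{\mathbb{H}^n}(\cdot,\widetilde\Sigma_0)$ to a horosphere $\widetilde\Sigma_0\subset\mathbb{H}^n$ is globally regular on one side, that $\mathbb{H}^n$ foliated by the equidistant horospheres $\widetilde\Sigma_d$ is isometric to $\mathbb{R}\times_h \widetilde\Sigma_0$ with $\widetilde\Sigma_0 = \mathbb{R}^{n-1}$ flat and $h(t) = e^{t}$ (up to orientation), so that $h'' - h = 0$, i.e.\ $G\equiv 1$, which satisfies conditions (\ref{G2}) with $h>0$ on all of $\mathbb{R}$. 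Then $h'/h \equiv 1$ and $\widetilde H_d = h'(d)/h(d) = 1$ for every $d$. Since $\pi\colon N\to\mathbb{H}^n$ is a Riemannian submersion and $\Sigma_0 = \pi^{-1}(\widetilde\Sigma_0)$, the equidistant hypersurfaces satisfy $\Sigma_d = \pi^{-1}(\widetilde\Sigma_d)$ and $\varrho = \widetilde\varrho\circ\pi$, exactly as in the proof of Theorem \ref{submersion}.

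Next I would verify the hypotheses needed to invoke the machinery: the curvature bound $K_N\ge -1$ everywhere gives in particular $K_N^{\rad}(p)\ge -1 = -G(\varrho(p))$ along the $\varrho$-minimizing geodesics issuing perpendicularly from $\Sigma_0$, which is (\ref{Krad2}) with $G\equiv 1$; the assumption $A_0\le g|_{T\Sigma_0}$ is condition (\ref{comp-A2-a}) since $h'(0)/h(0) = 1$; boundedness of $|{\bf H}|$ together with (\ref{HG2}) (trivially satisfied for $G\equiv 1$ and $B = \sup_M|{\bf H}|$, provided $\sup_M|{\bf H}|$ is finite, which we may assume as in Theorem \ref{hypersurface}) makes $M$ stochastically complete by Theorem \ref{criterion2}; and $\inf_\mathcal{U}h = \inf e^{t} > 0$ on the relevant side. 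Taking $\mathcal{U} = \mathcal{C}$, the connected component containing $\varphi(M)$, and $d$ arbitrarily large (or $+\infty$ with the localization of Theorem \ref{hypersurface}), the containment $\varphi(M)\subset\varrho^{-1}((-\infty,d])$ holds with $\varrho(\varphi(x_0))\ge 0$ after adjusting the origin $\Sigma_0$ so that it bounds $\mathcal{C}$.

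Now I would run the computation from the proof of Theorem \ref{submersion}, but with $\ell$-dimensional fibers rather than $1$-dimensional ones. Choosing in $\Sigma_0$ a local orthonormal frame $\{{\sf v}_i\}_{i=1}^{n+\ell-1}$ whose first $n-1$ vectors are basic (horizontal lifts of an orthonormal frame on $\widetilde\Sigma_0$) and whose last $\ell$ vectors $\{{\sf v}_{n},\dots,{\sf v}_{n+\ell-1}\}$ are vertical, the O'Neill-type splitting gives
\begin{equation}
(n+\ell-1)H_d = \sum_{i=1}^{n-1}\langle\nabla^{\mathbb{H}^n}_{\pi_*{\sf v}_i}\nabla^{\mathbb{H}^n}\widetilde\varrho,\pi_*{\sf v}_i\rangle + \sum_{j=n}^{n+\ell-1}\langle\bar\nabla_{{\sf v}_j}\bar\nabla\varrho,{\sf v}_j\rangle = (n-1)\widetilde H_d + \ell\,\kappa_d,
\end{equation}
where $\widetilde H_d = 1$ and $\kappa_d$ denotes the (averaged) geodesic curvature of the vertical fibers with respect to $\bar\nabla\varrho$. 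Then, exactly as in Theorem \ref{hypersurface}, I would form the truncated, bounded function $f_d$, apply the weak maximum principle (\ref{wmp}) guaranteed by stochastic completeness, and feed the inequality (\ref{Deltaf}) of Proposition \ref{mainformula} — with ambient dimension $n+\ell$ in place of $n$ — to obtain at the maximizing sequence $\{x_k\}$
\begin{equation}
m|{\bf H}| + \frac{1}{k\,h} > (n+\ell-1)H_d - (n+\ell-1-m)\frac{h'}{h} = (n-1) + \ell\,\kappa_d - (n+\ell-1-m) = m - \ell + \ell\,\kappa_d.
\end{equation}
Letting $k\to\infty$ and taking the infimum of $\kappa_d$ over $\mathcal{C}$ yields $|{\bf H}|\ge \frac{m-\ell}{m} + \frac{\ell}{m}\inf_\mathcal{C}\kappa$, and the non-existence statement is immediate.

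\textbf{Main obstacle.} The delicate point is the bookkeeping in the submersion splitting of $(n+\ell-1)H_d$: one must be careful that the horizontal part really produces $(n-1)\widetilde H_d$ with $\widetilde H_d = 1$ (this uses that the horizontal lifts of $\widetilde\varrho$-geodesics are $\varrho$-geodesics and that $\nabla\varrho$ is horizontal, so the integrability/O'Neill tensors do not contaminate the trace over basic directions), and that the vertical contribution is exactly $\ell\kappa_d$ with the correct sign convention for $\kappa$ relative to $\bar\nabla\varrho$. A secondary technical issue is the global regularity of $\varrho$ on the chosen component $\mathcal{C}$ and the justification that $\Sigma_0 = \pi^{-1}(\widetilde\Sigma_0)$ has no focal points on that side — but this follows from the corresponding fact for horospheres in $\mathbb{H}^n$ together with the submersion structure, since focal points of $\Sigma_0$ project to focal points of $\widetilde\Sigma_0$.
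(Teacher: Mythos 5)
Your proposal is correct and follows essentially the same route as the paper: identify $G\equiv 1$ and $h(d)=e^{d}$ from the horosphere foliation of $\mathbb{H}^n$, split the trace $(n+\ell-1)H_d$ of the equidistant hypersurface $\Sigma_d=\pi^{-1}(\widetilde\Sigma_d)$ into the horizontal contribution $(n-1)\widetilde H_d=(n-1)h'/h$ and the vertical contribution $\ell\kappa$ (the trace of the O'Neill tensor $T_{\bar\nabla\varrho}$), then feed this into inequality (\ref{Deltaf}) along a maximizing sequence for the truncated function $f_d$ as in Theorem \ref{hypersurface}. Your extra remarks on verifying stochastic completeness via Theorem \ref{criterion2} and on the regularity of $\varrho$ on $\mathcal{C}$ match what the paper does implicitly, so there is nothing substantive to add.
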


\begin{proof}\hspace{-1mm}: In terms of the notation fixed earlier in Section \ref{sec:comparison}, we have now $G=1$.  The base space $\mathbb{H}^n$ may be described in terms of  Fermi coordinates over $\widetilde \Sigma_0$ as the warped space $\mathbb{R}\times_h \widetilde \Sigma_0$ where $h(d)= e^{d}$.  The assumption $\varphi(M)\subset \mathcal{C}$ may be stated as $M \subset (\varrho\circ\varphi)^{-1}((0,+\infty))$. Choose $d>0$ in such a way that there exists $x_0\in M$ with $\varphi(x_0)\in \varrho^{-1}((0,d))$.  Hence truncating $f=g\circ\varphi$ at the level $d$ as in Theorem \ref{hypersurface} and using (\ref{Deltaf})
\begin{eqnarray*}
m|{\bf H}|+\frac{1}{kh} &\ge & (n+\ell-1)H_d-(n+\ell-1)\frac{h'}{h} + m\frac{h'}{h}\\
& \ge & (n-1) \frac{h'}{h} +\sum_{i=1}^\ell\langle \bar\nabla_{{\sf v}_i} \bar\nabla\varrho, {\sf v}_i \rangle  -(n+\ell-1)\frac{h'}{h} + m\frac{h'}{h}\\
& \ge & (m-\ell)\frac{h'}{h} +\ell\inf_{p\in \mathcal{C}} \kappa_p,
\end{eqnarray*}
where $\{{\sf v}_i\}_{i=1}^\ell$ is a local vertical orthonormal frame and $\kappa_p$ is the mean curvature of the vertical fiber $\pi^{-1}(p)$ with respect to the horizontal vector field $\bar\nabla\varrho$. In other terms, $\ell\kappa$  is the trace of the O'Neill's tensor $T_{\bar\nabla\varrho}$.
This concludes the proof. \hfill $\square$
\end{proof}

In the case that $N$ is a Riemannian product  $N\times L$ a variant of the calculations preceding Theorem \ref{thm4} involves only the sectional curvature of the basis $N$. Indeed we
have Theorem \ref{thm-product-intro} stated in the introduction. Here we present its proof for the sake of completeness.

\begin{proof}\hspace{-1mm}:
The proof follows the same guidelines of the proof of Proposition \ref{mainformula}.  We only need to adjust the calculations for using the Hessian comparison theorem on $N$ instead of $N\times L$. We have
\begin{eqnarray*}
\Delta  f & = & g'\sum_{i=1}^m \langle \bar\nabla_{\varphi_* {\sf e}_i} \bar\nabla \varrho, \varphi_*{\sf e}_i\rangle + g' \langle \bar\nabla\varrho, m{\bf H}\rangle + g''|\nabla (\varrho\circ\varphi)|^2\\
& = & g'\sum_{i=1}^m \langle \nabla^N_{\pi_*\varphi_* {\sf e}_i} \nabla^N \varrho_N, \pi_*\varphi_*{\sf e}_i\rangle + g' \langle \bar\nabla\varrho, m{\bf H}\rangle + g''|\nabla (\varrho\circ\varphi)|^2\\
& \ge & g'\frac{h'}{h}\sum_{i=1}^m |\pi_*\varphi_* {\sf e}_i^\perp |^2+ g' \langle \bar\nabla\varrho, m{\bf H}\rangle + g''|\nabla (\varrho\circ\varphi)|^2\\
&=& h'\sum_{i=1}^m |\pi_*\varphi_* {\sf e}_i^\perp |^2+ h \langle \bar\nabla\varrho, m{\bf H}\rangle + h'|\nabla (\varrho\circ\varphi)|^2
\end{eqnarray*}
In the inequality we used the Hessian comparison theorem for the distance function in $N$.
Using the fact that $\sum_{i=1}^{m}|(\pi_{\ast}\varphi_* {\sf e}_i)^{\perp}|^{2} + |\nabla\varrho\circ\varphi|^2 = m -
\sum_{i=1}^{m}|\pi^{L}_{\ast}\varphi_* {\sf e}_i| \ge m-\ell$, that $f$ is bounded above and that $M$ is stochastically complete, we
have at a maximizing sequence $x_k$ that 
$$
\begin{aligned}
\frac{1}{kh} + m|{\bf H}|  & \geq \frac{h'}{h}\big(m-\ell-|\nabla\varrho\circ\varphi|^2\big) + \frac{h'}{h}|\nabla (\varrho\circ\varphi)|^2
\\
& = \frac{h'}{h}(m-\ell).
\end{aligned}
$$
Finally, letting $k\to\infty$ we have
$$
\sup_{M} \vert {\bf H}\vert \geq \frac{(m-\ell)}{m} \inf \frac{h'}{h}.
$$
This concludes the proof. \hfill $\square$\end{proof}

\section{Half-space theorems in $\mathbb{H}^{n}\times \mathbb{R}^{\ell}$}\label{half-space-section}
The celebrated Strong Half-Space Theorem proved by  Hoffmann-Meeks
\cite{hoffmann-meeks} states that two complete, minimally and
properly immersed  surfaces of $\mathbb{R}^{3}$ must intersect,
unless they are parallel planes. The word {\em strong} here is to contrast with  the (weak) half-space theorem,
proved by W. Meeks in \cite{meeks}, which states that a  complete surface $M$ can not be immersed properly and minimally
  into a half-space $\{ x_i>0 \}\subset \mathbb{R}^{3}$   unless $M$ is a plane parallel to $\{x_i=0\}$.

These  half-space theorems were also proved in  the class of complete
surfaces with bounded sectional curvature, minimally immersed into
$\mathbb{R}^{3}$.  First, F. Xavier  \cite{xavier}  proved the weak half-space theorem, this is, a
complete  surface $M$ with bounded  curvature can not be  minimally immersed  into a half-space $\{x_i>0\}$ unless $M$ is
a plane parallel to the plane $\{x_i=0\}$. The Strong Half-Space Theorem in this setting (bounded curvature) was settled independently by
Bessa-Jorge-Oliveira \cite{bessa-jorge-oliveira} and
by  H. Rosenberg \cite{rosenberg-hst}. They  proved that two
complete   minimal surfaces with bounded  curvature intersect,
unless they are parallel planes.  Bessa-Jorge-Oliveira
also proved  the {\em
Mixed Half-Space Theorem}, see \cite{bessa-jorge-oliveira}, that states that  a complete  proper minimal
surface and a complete minimal surface with bounded curvature must
intersect unless they are parallel planes.

Recently, the theory of minimal surfaces was successfully extended to product spaces
$N\times \mathbb{R}$, where $N$ is a complete Riemannian surface. This extension started to
be developed after the seminal paper \cite{rosenberg-HXR} by H. Rosenberg. In some extent the study of minimal/cmc surfaces in $N\times\mathbb{R}$
is guided by the classical theory in $\mathbb{R}^{3}$. However, it is very sensitive to the
 geometry of  $N$,  yielding very different results from their classical counterparts, see, for instance,
\cite{rosenberg-HXR}, \cite{rosenberg-book} and references
therein.  In this spirit,
Hauswirth, Rosenberg and Spruck \cite{hauswirth-rosenberg-spruck}
proved a version of the Meeks' Half-Space Theorem in
$\mathbb{H}^{2}\times \mathbb{R}$.

\begin{theorem}[Hauswirth-Rosenberg-Spruck]
Let $\Sigma$ be a properly embedded constant mean curvature
$H=\frac{1}{2}$ surface in $\mathbb{H}^{2}\times \mathbb{R}$.
Suppose $\Sigma$ is contained in a horocylinder $C=\mathbb{B}\times
\mathbb{R}$ and is asymptotic to $C$, where  $\mathbb{B}$ is a
horoball. If the mean curvature vector of $\Sigma$ has the same
direction as that of $C$ at points of $\Sigma$ converging to $C$
then $\Sigma$ is equal to $C$ or is a subset of $C$ if $\partial
\Sigma\neq \emptyset$.\label{HRS-Thm}
\end{theorem}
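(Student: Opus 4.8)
The plan is to recognise that the Hauswirth--Rosenberg--Spruck statement is the equality case of the comparison principle of Section~\ref{sec:generalized-comparison} and then add the rigidity that the hypotheses of properness, asymptoticity and orientation are designed to provide. First I would set up the geometry. Write $\mathbb{H}^{2}\times\mathbb{R}$ as the total space of the trivial Riemannian submersion onto $\mathbb{H}^{2}$, so the vertical fibres are the $\mathbb{R}$-lines and their geodesic curvature is $\kappa=0$; take $\widetilde\Sigma_{0}=\partial\mathbb{B}$, a horocycle of $\mathbb{H}^{2}$, and $\Sigma_{0}=\partial\mathbb{B}\times\mathbb{R}$, the bounding horocylinder of $C$. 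Here $G\equiv 1$, the Jacobi equation \eqref{jacobi2-a} has the everywhere positive solution $h(t)=e^{t}$, so $h'/h\equiv 1$, and the Weingarten operator of $\Sigma_{0}$ has eigenvalues $1$ and $0$, hence $A_{0}\le g|_{T\Sigma_{0}}=\tfrac{h'(0)}{h(0)}g|_{T\Sigma_{0}}$; also $\varrho=\dist(\cdot,\Sigma_{0})$ on the side $\mathbb{B}\times\mathbb{R}$ is exactly $\log(y/c_{0})$ in an upper half-plane model, with equidistants the parallel horocylinders, all of mean curvature $\tfrac12$. Thus Theorem~\ref{submersion} applies, with borderline value $\tfrac{m-1}{m}\inf h'/h+\kappa=\tfrac12$ for $m=2$: it shows that a stochastically complete surface trapped in a bounded neighbourhood of $C$ inside $\mathbb{B}\times\mathbb{R}$ must satisfy $\sup|{\bf H}|\ge\tfrac12$, consistent with but weaker than the theorem. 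What is left is rigidity at $H\equiv\tfrac12$.

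For the rigidity step I would argue by contradiction, assuming that $\Sigma\neq\Sigma_{0}$, and foliate $\mathbb{B}\times\mathbb{R}$ by the parallel horocylinders $\Sigma_{s}=\varrho^{-1}(s)$, $s\ge 0$, each with $H\equiv\tfrac12$ and mean curvature vector $\tfrac12\,\bar\nabla\varrho$ pointing into the horoball. If $\Sigma$ meets some $\Sigma_{s}$ at an interior point $p$ and lies locally on the side of $\Sigma_{s}$ that $\bar\nabla\varrho$ points to, then the orientation hypothesis says the mean curvature vectors of $\Sigma$ and of $\Sigma_{s}$ point to the same side at $p$; writing both surfaces as graphs over $\Sigma_{s}$ near $p$ and subtracting the (analytic, quasilinear) $H=\tfrac12$ equations, Hopf's strong maximum principle forces $\Sigma=\Sigma_{s}$ in a neighbourhood of $p$, hence, by analyticity and connectedness, $\Sigma\subseteq\Sigma_{s}$; asymptoticity to $\Sigma_{0}$ then forces $s=0$, which is the conclusion (with the usual caveat when $\partial\Sigma\neq\emptyset$).

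The genuine difficulty is that because $\Sigma$ is only \emph{asymptotic} to $\Sigma_{0}$, one has $\inf_{\Sigma}\varrho=0$, so sliding the leaves never produces an interior tangency: the asymptotic end always obstructs, and contact occurs only at infinity. I would resolve this by a translation--compactness argument along the parabolic isometries of $\mathbb{H}^{2}$ fixing the point at infinity, composed with vertical translations of $\mathbb{R}$: choosing $q_{j}\in\Sigma$ diverging with $\varrho(q_{j})\to 0$ and isometries $T_{j}$ carrying $q_{j}$ to a fixed interior point $q_{0}$ of $\Sigma_{0}$, one extracts a $C^{\infty}_{\mathrm{loc}}$-limit $\Sigma_{\infty}$ of $T_{j}(\Sigma)$, still contained in $\overline{\mathbb{B}\times\mathbb{R}}$, still with $H\equiv\tfrac12$, but now touching $\Sigma_{0}$ at the interior point $q_{0}$ from the horoball side; the tangency principle of the previous paragraph gives $\Sigma_{\infty}=\Sigma_{0}$, and a unique-continuation/linearised-equation argument propagates this rigidity of the limit back to $\Sigma$ itself. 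The main obstacle I expect is precisely the non-degeneracy of this limit: one must produce uniform local area and second-fundamental-form bounds for the translated surfaces $T_{j}(\Sigma)$ near $\Sigma_{0}$, and one must rule out $\Sigma$ peeling off to the wrong side of $\Sigma_{0}$. The bounded mean curvature $|{\bf H}|=\tfrac12$, the trapping estimate $\sup|{\bf H}|\ge\tfrac12$ of Theorem~\ref{submersion} (which pins the normal behaviour to the model $h=e^{t}$), the convexity of the barrier horocylinders, and the same-direction hypothesis on the mean curvature vector are exactly the ingredients that supply these controls; turning them into the quantitative estimates needed for the limit is where the substance of the proof lies.
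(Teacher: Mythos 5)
First, a point of orientation: the paper does not prove Theorem \ref{HRS-Thm}. It is quoted from \cite{hauswirth-rosenberg-spruck} as motivation, and what the authors actually establish is the different statement of Theorems \ref{thmMain1} and \ref{thmMain2}: a properly immersed surface (hence, having bounded mean curvature, a stochastically complete one) contained in a horocylinder with an interior point satisfies $\sup|\mathbf{H}|\ge\frac12$ --- with no rigidity conclusion and no asymptoticity or orientation hypotheses. Your first paragraph is essentially the paper's argument for that estimate (your choice of $\Sigma_0=\partial\mathbb{B}\times\mathbb{R}$ with $h(t)=e^{t}$ is a clean variant of the paper's choice of a horocylinder over an antipodal horoball), and it is correct. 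You are also right that the comparison principle can only ever produce the borderline inequality and says nothing new when $H\equiv\frac12$; the entire content of Theorem \ref{HRS-Thm} is the rigidity, which lies outside the methods of this paper.

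The rigidity portion of your proposal has two genuine gaps. (i) The translation--compactness step requires uniform local area and second-fundamental-form bounds for the translated surfaces $T_j(\Sigma)$; for a properly embedded $H=\frac12$ surface these are not automatic, and you give no mechanism for obtaining them. You flag this yourself, but flagging it does not close it, and it is precisely the hard analytic input. (ii) More seriously, the final step --- ``a unique-continuation/linearised-equation argument propagates this rigidity of the limit back to $\Sigma$ itself'' --- is not a valid deduction. Knowing that a sequence of isometric translates of $\Sigma$ converges in $C^{\infty}_{\mathrm{loc}}$ to $\Sigma_0$ constrains only the asymptotic behaviour of the end of $\Sigma$; unique continuation compares two solutions that agree to infinite order at a point lying on \emph{both} of them, and $\Sigma_0$ is not attained by $\Sigma$ at any finite point. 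To conclude one must manufacture an honest interior tangency of $\Sigma$ with a comparison surface, which is exactly what your own ``contact only at infinity'' observation shows the foliation by parallel horocylinders cannot provide. The classical resolution (as in Meeks' half-space theorem and in \cite{hauswirth-rosenberg-spruck}) is to replace the non-compact foliation by a family of \emph{compact} $H=\frac12$ barriers --- solutions of a Plateau/Dirichlet problem in the slab between two horocylinders, playing the role of catenoid necks --- and slide those to a first interior contact with $\Sigma$, to which the tangency principle then applies directly. Your blow-up scheme does not substitute for this: as written, the proposal correctly isolates the difficulty but does not overcome it.
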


In $\mathbb{H}^2\times\mathbb{R}$, the horocylinders and the  surfaces with constant mean curvature $H=\frac{1}{2}$  play, respectively, the role of the planes and the minimal surfaces  in the half-space theorems in $\mathbb{R}^{3}$. In this paper we prove an extension Theorem \ref{HRS-Thm}, in many aspects, in the following  half-space theorem in $\mathbb{H}^{2}\times \mathbb{R}$.

\begin{theorem}\label{thmMain1}
Let $\Sigma\hookrightarrow \mathbb{H}^{2}\times \mathbb{R}$ be a properly immersed surface in
$\mathbb{H}^{2}\times \mathbb{R}$.  If $\Sigma$ is contained in a
horocylinder $C=\overline{\mathbb{B}}\times \mathbb{R}$ with an
interior point, $\mathbb{B}\subset \mathbb{H}^{2}$ a horoball,
then $\sup \vert {\bf H} \vert \geq \frac{1}{2}$. In particular, if
$\Sigma$ is  contained in a horocylinder
$C=\overline{\mathbb{B}}\times \mathbb{R}$ and  $\vert {\bf H} \vert
<\frac{1}{2}$ then $\Sigma\subset \partial C=\partial
\mathbb{B}\times \mathbb{R}$.
\end{theorem}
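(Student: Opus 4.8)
The plan is to read Theorem \ref{thmMain1} off Theorem \ref{thm-product-intro} applied with $N=\mathbb{H}^2$, $L=\mathbb{R}$ (so $m=2$, $\ell=1$), taking for the barrier hypersurface $\Sigma_0\subset\mathbb{H}^2$ a carefully chosen horocycle. Two choices make the machinery bite. First, the signed distance $\varrho$ to $\Sigma_0$ must be oriented so that $\bar\nabla\varrho$ points \emph{away} from the point at infinity $\xi$ determining the horocylinder; this forces the comparison ODE $h''-h=0$ to be used with the positive solution $h(t)=e^{t}$, for which $h'/h\equiv 1>0$, rather than with $e^{-t}$, which would only give the vacuous bound $\sup_\Sigma|{\bf H}|\ge-\tfrac12$. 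Second, $\Sigma_0$ must be taken \emph{not} as $\partial\mathbb{B}$ itself but as the horocycle through the $\mathbb{H}^2$-projection of the prescribed interior point of $\Sigma$; this is precisely what is needed so that the normalization $\varrho(\varphi(x_0))\ge0$ holds while $\Sigma$ still lies on one side, $\widetilde\varrho^{-1}\big((-\infty,d]\big)$.

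First I would dispose of the trivial case $\sup_\Sigma|{\bf H}|=\infty$. When $|{\bf H}|$ is bounded, I note that $\Sigma$, being properly immersed in the complete manifold $\mathbb{H}^2\times\mathbb{R}$, is complete in the induced metric, and that its (closed) image omits some point $p$; since $\mathbb{H}^2\times\mathbb{R}$ is a Hadamard manifold (hence has empty cut locus) with sectional curvature in $[-1,0]$, Criterion \ref{criterion1} applies with the constant function $G\equiv1$ — legitimate after dropping the $\limsup$ condition via Borb\'ely's refinement, see the remark following Theorem \ref{criterion2} — and shows that $\Sigma$ is stochastically complete. Thus the standing hypothesis of Theorem \ref{thm-product-intro} is in force.

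Next I would build the barrier. Let $\pi\colon\mathbb{H}^2\times\mathbb{R}\to\mathbb{H}^2$ be the projection, let $\mathbb{B}$ be a horoball centered at $\xi\in\partial\mathbb{H}^2$, choose $x_0\in\Sigma$ with $q_0:=\pi(\varphi(x_0))$ in the open horoball $\mathbb{B}$, and let $\mathbb{B}_0$ be the horoball centered at $\xi$ whose boundary horocycle $\Sigma_0=\partial\mathbb{B}_0$ passes through $q_0$; then $\overline{\mathbb{B}_0}\subset\mathbb{B}$. Let $\varrho=\dist(\cdot,\Sigma_0)$ be the signed distance oriented so that $\bar\nabla\varrho$ points out of $\mathbb{B}_0$, and set $d=\dist(\partial\mathbb{B}_0,\partial\mathbb{B})>0$, so that $\overline{\mathbb{B}}=\varrho^{-1}\big((-\infty,d]\big)$ and $\varrho(q_0)=0$. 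Since horocycles have no focal points, $\varrho$ is smooth and regular on $\mathcal{U}=\mathbb{H}^2$. I then verify the hypotheses of Theorem \ref{thm-product-intro}: as $\mathbb{H}^2$ has constant curvature $-1$ one has $K_{\mathbb{H}^2}^{\rad}\equiv-1\le-G(\varrho)$ with $G\equiv1$, and the corresponding positive solution of $h''-h=0$ is $h(t)=e^{t}$ on $(d_*,d^*)=(-\infty,\infty)$; a horocycle in $\mathbb{H}^2$ has geodesic curvature $1$, so with the chosen orientation
\[
\bar\nabla^2\varrho\big|_{T\Sigma_0}=\langle\,\cdot\,,\,\cdot\,\rangle\big|_{T\Sigma_0}=\frac{h'(0)}{h(0)}\,\langle\,\cdot\,,\,\cdot\,\rangle\big|_{T\Sigma_0},
\]
which is hypothesis~1; and with $\widetilde\varrho=\varrho\circ\pi$ we have $\varphi(\Sigma)\subset\overline{\mathbb{B}}\times\mathbb{R}=\widetilde\varrho^{-1}\big((-\infty,d]\big)$ and $\widetilde\varrho(\varphi(x_0))=\varrho(q_0)=0\ge0$, which is hypothesis~2. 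Theorem \ref{thm-product-intro} then yields
\[
\sup_\Sigma|{\bf H}|\;\ge\;\frac{m-\ell}{m}\,\inf_{t\in[0,d]}\frac{h'(t)}{h(t)}\;=\;\frac12\cdot1\;=\;\frac12 .
\]
For the last assertion of the theorem, if $\sup_\Sigma|{\bf H}|<\tfrac12$ then by what was just proved $\Sigma$ cannot contain an interior point of $C$, i.e. $\pi(\Sigma)\subset\partial\mathbb{B}$, hence $\Sigma\subset\partial\mathbb{B}\times\mathbb{R}=\partial C$.

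I expect the main obstacle to be precisely the bookkeeping around this barrier. The single free sign in $\varrho$ must be fixed so that the comparison ODE produces $h'/h=+1$ (giving the bound $\tfrac12$) and not $-1$ (giving the vacuous $-\tfrac12$); equivalently, hypothesis~1 must be arranged with $h'(0)/h(0)=1$, which forces the horocycle to have geodesic curvature $\ge1$ with respect to $\bar\nabla\varrho$, i.e.\ forces $\bar\nabla\varrho$ to point away from $\xi$. At the same time the barrier horocycle must be translated inward to pass through $q_0$ so that $\widetilde\varrho(\varphi(x_0))\ge0$, and $\Sigma$ must still be seen to lie on the side $\widetilde\varrho\le d$; these two demands are incompatible unless one uses exactly the horocycle through $q_0$. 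A lesser technical point is justifying the constant choice $G\equiv1$ in the stochastic completeness step, which rests on Borb\'ely's improvement of the Pigola--Rigoli--Setti criterion.
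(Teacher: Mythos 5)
Your proof is correct and follows the same two-step strategy as the paper: first reduce to bounded mean curvature and deduce stochastic completeness from the Pigola--Rigoli--Setti criterion (the paper invokes Theorem \ref{criterion2}, you invoke Criterion \ref{criterion1} with $G\equiv 1$ plus Borb\'ely's refinement; either works, and your point-based version sidesteps the fact that the bounded function $\varrho\circ\varphi$ is not proper), and then apply Theorem \ref{thm-product-intro} with a horocylinder barrier. Where you genuinely diverge is in the choice of that barrier. The paper's proof of Theorem \ref{thmMain2} takes $\Sigma_0=\partial\mathbb{B}_0\times\mathbb{R}^\ell$ for a horoball $\mathbb{B}_0$ centered at an ideal point $q_\infty$ \emph{distinct} (antipodal) from $p_\infty$ and quotes $h(d)=\sinh d$, whereas you take the horocycle \emph{concentric} with $\mathbb{B}$ passing through $\pi(\varphi(x_0))$, with $h(t)=e^{t}$. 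Your choice is the one under which hypothesis 2 of Theorem \ref{thm-product-intro} is actually verifiable: with the orientation that makes $h'/h=+1$, the sublevel sets $\widetilde\varrho^{-1}\bigl((-\infty,d]\bigr)$ are horocylinders over horoballs centered at the ideal point of $\Sigma_0$, and a horoball at $p_\infty$ is contained in such a set only when the two ideal points coincide (moreover $\sinh$ is the comparison profile of geodesic spheres rather than of horocycles, whose principal curvature $1$ forces $h'(0)/h(0)=1$, i.e.\ $h=e^{t}$). So the barrier bookkeeping that you single out as the crux is indeed where the content lies, and your implementation of it --- the Hessian of the signed distance on a horocycle equal to $+\langle\cdot,\cdot\rangle$ with the outward orientation, the identification $\overline{\mathbb{B}}=\varrho^{-1}\bigl((-\infty,d]\bigr)$ with $d=\dist(\partial\mathbb{B}_0,\partial\mathbb{B})$, and the contrapositive giving the final assertion --- is the clean and, as far as I can see, correct realization of the paper's idea.
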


 Clearly, Theorem \ref{thmMain1} is a direct consequence  of the following mean curvature estimate in  the class of stochastically complete $m$-submanifolds  of $\mathbb{H}^{n}\times
\mathbb{R}^{\ell}$, $m\geq \ell +1$, since we may suppose that $\sup \vert{\bf  H}\vert < \infty$ and in this case, by  Theorem  \ref{criterion2}, $M$ is stochastically complete.

\begin{theorem}\label{thmMain2}
Let $\varphi\colon M^m \to \mathbb{H}^{n}\times
\mathbb{R}^{\ell}$, $m\geq \ell +1$, be an isometric immersion of
a stochastically complete $m$-dimensional Riemannian manifold into
 $\mathbb{H}^{n}\times \mathbb{R}^{\ell}$. Let
$\mathbb{B}\subset \mathbb{H}^{n}$ be a horoball and let
$C=\overline{\mathbb{B}}\times \mathbb{R}^{\ell}$ be a generalized
horocylinder. If $\varphi (M)\subset C$ has an interior point,
then the mean curvature of $\varphi(M)$ satisfies \[\sup_{M}\vert {\bf
H}\vert \geq \frac{m-\ell}{m}\cdot\]
\end{theorem}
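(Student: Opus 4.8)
The plan is to deduce Theorem \ref{thmMain2} as a direct application of Theorem \ref{thm-product-intro} (equivalently Theorem \ref{thm4} in the product setting) with a judicious choice of hypersurface $\Sigma_0$. First I would take $N=\mathbb{H}^n$, $L=\mathbb{R}^\ell$, and let $\widetilde\Sigma_0=\partial\mathbb{B}\subset\mathbb{H}^n$ be the horosphere bounding the horoball $\mathbb{B}$, so that $\Sigma_0=\partial\mathbb{B}$ and $\widetilde\varrho=\dist(\cdot,\partial\mathbb{B})$ is the signed distance in $\mathbb{H}^n$, taken positive \emph{inside} $\mathbb{B}$. The key point is that $\mathbb{H}^n$ in Fermi coordinates over the horosphere is exactly the warped product $\mathbb{R}\times_h\widetilde\Sigma_0$ with $h(t)=e^{t}$ (after choosing the orientation appropriately), and one checks $h''-Gh=0$ with $G\equiv 1$, which satisfies conditions \eqref{G2}; moreover the maximal interval of positivity is all of $\mathbb{R}$, so $d_*=-\infty$, $d^*=+\infty$, and $\inf h>0$ on any set $\widetilde\varrho\ge $ const. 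The signed distance $\varrho$ is regular on all of $\mathbb{H}^n$ since a horosphere has no focal points, so $\mathcal{U}=\mathbb{H}^n\times\mathbb{R}^\ell$.

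Next I would verify the two hypotheses of Theorem \ref{thm-product-intro}. For hypothesis 1, the second fundamental form of the horosphere $\partial\mathbb{B}$ with respect to the inward normal (the direction of increasing $\varrho$) equals $\langle\cdot,\cdot\rangle$, i.e. $\bar\nabla^2\varrho|_{T\Sigma_0}=\langle\cdot,\cdot\rangle=\frac{h'(0)}{h(0)}\langle\cdot,\cdot\rangle$ since $h'(0)/h(0)=1$; in particular the required inequality $\bar\nabla^2\varrho|_{T\Sigma_0}\ge\frac{h'(0)}{h(0)}\langle\cdot,\cdot\rangle$ holds with equality. The curvature condition \eqref{comp-reverse1} reads $K_{\mathbb{H}^n}^{\rad}\le -G(\varrho)=-1$, which holds with equality since $\mathbb{H}^n$ has constant curvature $-1$ (and radial sectional curvatures of the ambient product $\mathbb{H}^n\times\mathbb{R}^\ell$ along directions orthogonal to $\bar\nabla\widetilde\varrho$ split into the $\mathbb{H}^n$ part, which is $-1$, and $\mathbb{R}^\ell$ directions which are $0$, but it is the $N$-only version in the proof of Theorem \ref{thm-product-intro} that is used). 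For hypothesis 2, the assumption that $\varphi(M)\subset C=\overline{\mathbb{B}}\times\mathbb{R}^\ell$ with an interior point says precisely that $\widetilde\varrho\circ\varphi\ge 0$ on $M$ with $\widetilde\varrho(\varphi(x_0))>0$ for some $x_0$, but we need an \emph{upper} bound $\widetilde\varrho\le d$; this is automatic because $\widetilde\varrho\le 0$ outside $\mathbb{B}$ would be the issue — wait, here $\widetilde\varrho\ge 0$ on the whole image and we need it bounded above. The point is that inside the horoball $\widetilde\varrho$ is \emph{not} bounded above, so I should instead orient $\varrho$ so that it is \emph{negative} inside $\mathbb{B}$: set $\varrho=-\dist(\cdot,\partial\mathbb{B})$ inside $\mathbb{B}$, i.e. $h(t)=e^{-t}$, giving $h'(0)/h(0)=-1$; then $\varphi(M)\subset C$ means $\varrho\circ\varphi\le 0\le d$ and the interior point gives $\varrho(\varphi(x_0))<0$. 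But then hypothesis 1 fails. The correct resolution: apply Theorem \ref{thm4}/\ref{thm-product-intro} with $\Sigma_0$ being a horosphere \emph{disjoint} from $\overline{\mathbb{B}}$ and ``larger'', or equivalently translate so that $\varrho$ measures signed distance from a horosphere such that $\mathbb{B}$ lies in $\{\varrho\le d\}$ for appropriate finite $d$ on the side where $h'/h\to$ the right limit.

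Concretely, I would choose $\widetilde\Sigma_0$ to be a horosphere with the \emph{same} point at infinity as $\partial\mathbb{B}$ but positioned so that $\overline{\mathbb{B}}$ lies in the region $0\le\widetilde\varrho\le d$ for some finite $d>0$, with $\widetilde\varrho$ increasing toward the point at infinity and $h(t)=e^{t}$; any horosphere nested outside $\mathbb{B}$ works, and $\overline{\mathbb{B}}$ sits at bounded distance from it, supplying the finite $d$. Then hypothesis 1 holds with equality ($h'(0)/h(0)=1$ matches the horosphere's Weingarten operator with respect to the chosen normal), hypothesis 2 holds, and $\inf_{t\in[0,d]}h'(t)/h(t)=\inf_{[0,d]}1=1$. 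Applying Theorem \ref{thm-product-intro} with $m-\ell$ in the numerator yields $\sup_M|{\bf H}|\ge\frac{m-\ell}{m}\cdot 1=\frac{m-\ell}{m}$, which is the claim. The main obstacle — and the only genuinely delicate point — is getting the orientation of the signed distance function consistent between the two hypotheses: one needs the horosphere $\Sigma_0$ chosen so that (a) its Weingarten operator with respect to $+\bar\nabla\varrho$ is $\ge h'(0)/h(0)=1$ (which forces $\bar\nabla\varrho$ to point toward the \emph{concave} side, into the horoball region) and simultaneously (b) the image $\varphi(M)\subset\overline{\mathbb{B}}\times\mathbb{R}^\ell$ lies in $\{\varrho\le d\}$ with $\varrho\ge 0$ somewhere; the resolution is that the mean-convex side and the side containing $\overline{\mathbb{B}}$ agree once $\Sigma_0$ is taken to be a horosphere enclosing $\mathbb{B}$ with the ``inward'' normal, and the ratio $h'/h\equiv 1$ is constant so the infimum is attained trivially. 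Once the orientations are pinned down, everything else is a verbatim specialization of the already-proven general theorem.
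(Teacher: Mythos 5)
Your overall strategy --- specialize Theorem \ref{thm-product-intro} to $N=\mathbb{H}^n$, $L=\mathbb{R}^\ell$ with a horocylinder as $\Sigma_0$ and $h(t)=e^{t}$, so that $\inf h'/h=1$ --- is exactly the one the paper uses (it also takes $\Sigma_0=\partial\mathbb{B}_0\times\mathbb{R}^\ell$ for an auxiliary horoball and feeds it into Theorem \ref{thm-product-intro}). However, your concrete verification of the two hypotheses is incorrect, and the whole content of this proof is precisely that verification. First, you have the sign of the Hessian of $\varrho$ on a horosphere backwards: computing in the upper half-space model with $\mathbb{B}=\{x_n>1\}$ one finds $\bar{\nabla}^2\varrho\big|_{T\Sigma_0}=+\langle\,\,,\,\rangle$ exactly when $\varrho$ is taken positive on the side \emph{away} from the ideal point $p_\infty$ (so $\varrho$ \emph{decreases} toward $p_\infty$), and $-\langle\,\,,\,\rangle$ with the opposite orientation. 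Hence your first-paragraph claim that the orientation ``positive inside $\mathbb{B}$'' satisfies hypothesis 1, and your later claim that the reversed orientation violates it, are both wrong, and this is what pushes you toward the wrong final configuration. Second, that final configuration is geometrically impossible: a horoball lies at \emph{unbounded} distance from every horosphere, so $\overline{\mathbb{B}}$ can never be contained in a slab $\{0\le\widetilde\varrho\le d\}$; and with $\widetilde\varrho$ ``increasing toward the point at infinity'' it is unbounded above on $\overline{\mathbb{B}}$, so hypothesis 2 fails (and, by the sign computation above, so does hypothesis 1).

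The repair is short. Take $\Sigma_0=\partial\mathbb{B}_0\times\mathbb{R}^\ell$ where $\mathbb{B}_0\subset\mathbb{B}$ is a horoball with the \emph{same} ideal point $p_\infty$, chosen deep enough inside $\mathbb{B}$ that the given interior point $\varphi(x_0)$ lies outside $\mathbb{B}_0\times\mathbb{R}^\ell$, and orient $\varrho$ positive on $\mathbb{H}^n\setminus\mathbb{B}_0$. Then $\bar{\nabla}^2\varrho\big|_{T\Sigma_0}=\langle\,\,,\,\rangle=\frac{h'(0)}{h(0)}\langle\,\,,\,\rangle$ with $h(t)=e^{t}$, so hypothesis 1 holds with equality; moreover $\overline{\mathbb{B}}\subset\varrho^{-1}((-\infty,d])$ with $d=\dist(\partial\mathbb{B}_0,\partial\mathbb{B})$ --- only the \emph{upper} bound is required by Theorem \ref{thm-product-intro}, so the fact that $\varrho\to-\infty$ toward $p_\infty$ inside $\mathbb{B}$ is harmless --- and $\varrho(\varphi(x_0))\ge 0$, which gives hypothesis 2 and also keeps the maximizing sequence in the region where $h$ is bounded below. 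Theorem \ref{thm-product-intro} then yields $\sup_M|{\bf H}|\ge\frac{m-\ell}{m}\inf_{t\in[0,d]}\frac{h'(t)}{h(t)}=\frac{m-\ell}{m}$. (For comparison, the paper chooses $\mathbb{B}_0$ with an ideal point \emph{distinct} from $p_\infty$ and quotes $h(d)=\sinh d$; the mechanism --- a horocylinder barrier inserted into Theorem \ref{thm-product-intro} --- is the same as yours, and the same orientation bookkeeping you stumbled on has to be carried out there as well.)
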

\begin{remark}We should remark that Theorem \ref{thmMain2} can be also obtained using the geometry of the Busemann functions of $\mathbb{H}^{n}$, \cite{pigola}. In fact, in this point of view, Theorem \ref{thmMain2}  can be extended to more general Hadamard manifolds, \cite{blsp}.
\end{remark}

\begin{proof}\hspace{-1mm}: By assumption, we have  $\varphi (M) \subset
C=\overline{\mathbb{B}}\times \mathbb{R^{\ell}}$,  $\mathbb{B}\subset \mathbb{H}^{n}$ a horoball with ideal point $p_{\infty}$ and    with an interior
point $\varphi(x_0)=p_0\in \varphi (M)\cap (\mathbb{B}\times
\mathbb{R^{\ell}})$.  Assume without loss of generality that
$p_0=(z_0, 0)\in \mathbb{B}\times \{0\}$.   Let $\mathbb{B}_0$ be a horoball in $\mathbb{H}^n$ with ideal point $q_{\infty}$, distinct from $p_{\infty}$.  For instance, consider these points as antipodal in the disc model of $\mathbb{H}^n$. Fix $\Sigma_0=\partial \mathbb{B}_0\times \mathbb{R}^l$, the horocylinder over  $\partial\mathbb{B}_0$. Consider an equidistant horocylinder  $\Sigma_{d'}$ which also intersects
$\mathbb{B}\times \mathbb{R}^l$.  Settled these choices, the conclusion comes from Theorem \ref{thm-product-intro}  if we recall that $h(d)=\sinh d$. \hfill $\square$
\end{proof}

An immediate corollary of Theorem \ref{thmMain2} is the following result that can be regarded as a Mixed Half-Space Theorem in $\mathbb{H}^{n}\times \mathbb{R}^{\ell}$. \begin{theorem}\label{thmMain3}
Let $\Sigma$ be an immersed submanifold in $\mathbb{H}^{n}\times
\mathbb{R^{\ell}}$, $m\geq \ell+1$, with scalar curvature
satisfying
\[
s_{\Sigma}(x)\geq - c^{2}\, \rho_{\Sigma}^{2}(x)\log
(\rho_{\Sigma}(x) +1),\,\,\, \mbox{for }\rho_{\Sigma}\gg 1 \mbox{
and for some } c\in \mathbb{R}.
\]
Let $C$ be either a generalized horocylinder
$C=\overline{\mathbb{B}}\times \mathbb{R^{\ell}}.$ If $\Sigma \subset C$ has an interior point then $$\sup \vert {\bf H}
\vert \geq \displaystyle\frac{m-\ell}{m}.$$Here $p\in \Sigma$ and $\rho_{\Sigma}(x)={\rm
dist}_{\Sigma}(p,x)$.
\end{theorem}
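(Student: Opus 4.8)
The plan is to reduce Theorem~\ref{thmMain3} to Theorem~\ref{thmMain2}: the only thing that needs to be checked is that the curvature hypothesis on $\Sigma$ forces it to be stochastically complete, and once this is in hand Theorem~\ref{thmMain2} applies verbatim, the configuration $\Sigma\subset C=\overline{\mathbb{B}}\times\mathbb{R}^{\ell}$ with an interior point being precisely the one required there. To start, if $\sup_{\Sigma}|{\bf H}|=+\infty$ there is nothing to prove, since $(m-\ell)/m$ is finite; so assume $b:=\sup_{\Sigma}|{\bf H}|<\infty$.

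The heart of the argument is a twofold use of the Gauss equation, which works here because $\mathbb{H}^{n}\times\mathbb{R}^{\ell}$ has sectional curvature lying in $[-1,0]$. Let $A$ denote the second fundamental form and $\{{\sf e}_{i}\}$ a local orthonormal frame tangent to $\Sigma$. The Gauss equation for the scalar curvature reads
\[
|A|^{2}=\sum_{i\ne j}K_{N}({\sf e}_{i}\wedge{\sf e}_{j})+m^{2}|{\bf H}|^{2}-s_{\Sigma},
\]
and its first term lies in $[-m(m-1),0]$. Hence the hypothesis $s_{\Sigma}(x)\ge -c^{2}\rho_{\Sigma}^{2}(x)\log(\rho_{\Sigma}(x)+1)$, valid for $\rho_{\Sigma}\gg1$, gives $|A|^{2}\le c_{1}+c^{2}\rho_{\Sigma}^{2}\log(\rho_{\Sigma}+1)$ for $\rho_{\Sigma}\gg1$, with $c_{1}=c_{1}(m,b)$. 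Inserting this into the Gauss formula for the Ricci curvature, $\mathrm{Ric}_{\Sigma}({\sf v},{\sf v})=\sum_{j}K_{N}({\sf v}\wedge{\sf e}_{j})+\langle A({\sf v},{\sf v}),m{\bf H}\rangle-\sum_{j}|A({\sf v},{\sf e}_{j})|^{2}$ for a unit vector ${\sf v}$, and estimating crudely (using $\langle A({\sf v},{\sf v}),m{\bf H}\rangle\ge-mb|A|$ and $\sum_{j}|A({\sf v},{\sf e}_{j})|^{2}\le|A|^{2}$), one obtains $\mathrm{Ric}_{\Sigma}({\sf v},{\sf v})\ge-(m-1)-mb|A|-|A|^{2}\ge-c_{2}\bigl(1+|A|^{2}\bigr)$, and therefore
\[
\mathrm{Ric}_{\Sigma}\ \ge\ -\,G\circ\rho_{\Sigma}\qquad\text{on }\Sigma,\qquad G(t):=c_{3}\bigl(1+t^{2}\log(t+1)\bigr),
\]
for a suitable constant $c_{3}>0$.

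It then only remains to observe that this $G$ is of the admissible type: $G(0)=c_{3}>0$, $G'\ge0$ on $[0,\infty)$, and $G^{-1/2}\notin L^{1}(+\infty)$ because $\int^{\infty}\!dt/\bigl(t\sqrt{\log t}\bigr)=+\infty$; the extra requirement $\limsup_{t\to\infty}tG(t^{1/2})/G(t)<\infty$ of \eqref{G} also holds, and may in any case be dropped after Borb\'ely~\cite{borbely-Bull-Aus}. By the Laplacian comparison theorem applied to the intrinsic distance $\rho_{\Sigma}$ of the complete manifold $\Sigma$, the function $\rho_{\Sigma}^{2}$ together with $G$ constitutes an Omori--Yau pair on $\Sigma$ in the sense of \cite[Def.~3.3]{alias-bessa-montenegro-piccione}, so $\Sigma$ is stochastically complete. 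Theorem~\ref{thmMain2} now yields $\sup_{\Sigma}|{\bf H}|\ge(m-\ell)/m$, which is the assertion.

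The main obstacle, and the only step that is not mere bookkeeping, is the first passage above: extracting a one-sided \emph{Ricci} bound of the correct $r^{2}\log r$ growth from a one-sided \emph{scalar} curvature bound. This succeeds only because the ambient curvature of $\mathbb{H}^{n}\times\mathbb{R}^{\ell}$ is bounded on both sides and because the trivial case $\sup|{\bf H}|=+\infty$ has been removed, so that $|A|^{2}$ is controlled by $-s_{\Sigma}$ up to an additive constant. All that follows — verifying $G$ against the conditions in \eqref{G2} and \eqref{G}, invoking the stochastic completeness criterion, and appealing to Theorem~\ref{thmMain2} — is routine.
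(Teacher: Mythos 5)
Your proposal is correct and follows essentially the same route as the paper: reduce to Theorem~\ref{thmMain2} by showing the scalar curvature hypothesis forces stochastic completeness, the paper disposing of this step by citing \cite[Thm.~1.15]{prs-memoirs} while you spell out the underlying Gauss-equation argument ($|A|^2$ controlled by $-s_\Sigma$ once $\sup|{\bf H}|<\infty$ is assumed without loss of generality, hence a Ricci lower bound of admissible growth). The details you supply — in particular the observation that both an upper and a lower ambient curvature bound are needed — are accurate and consistent with the cited source.
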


\begin{proof}\hspace{-1mm}: It
suffices to see that a Riemannian manifold $\Sigma$ with scalar
curvature \[s_{\Sigma}(x)\geq - c^{2}\cdot
\rho_{\Sigma}^{2}(x)\log (\rho_{\Sigma}(x)
+1),\quad\rho_{\Sigma}\gg 1,\,\,\, c\in \mathbb{R}\] immersed into a
complete Riemannian manifold with sectional curvature bounded
above is stochastically complete. See the proof of
\cite[Thm. 1.15]{prs-memoirs}. \hfill $\square$
\end{proof}

\section{Horizontal half-space theorems}

I. Salavessa in \cite{Salavessa-thesis}, \cite{salavessa}, constructed for each  $c\in (0, n-1)$ a smooth radial function $u\colon \mathbb{H}^{n}\to \mathbb{R}$ whose graph $\Gamma_n(\frac{c}{n})=\{(x, u(x))\in \mathbb{H}^{n}\times \mathbb{R}\}$ has constant mean curvature $ c/n$. R. S. Earp and E. Toubiana \cite{earp-toubiana} found explicit formulas for rotational surfaces of $\mathbb{H}^{2}\times \mathbb{R}$ with constant mean curvature $\vert H \vert = \frac{1}{2}$. These surfaces are indexed by $\mathbb{R}_{+}$, meaning that for each $\alpha\in \mathbb{R}_{+}$ there exists a surface $\mathcal{H}_{\alpha}\subset \mathbb{H}^{2}\times \mathbb{R}$ with constant mean curvature $\vert H \vert = \frac{1}{2}$. For $\alpha <1$ the surface has two vertical ends. For $\alpha >1$, $\mathcal{H}_{\alpha}$ is not embedded and $\mathcal{H}_{1}$ has only one end and it is a graph over $\mathbb{H}^{2}$. It turns out that $\mathcal{H}_{1}=\Gamma_{2}(\frac{1}{2})$, the Salavessa's surface.
 Earp and  Nelli \cite{earp-nelli}
obtained  a version of Meeks Half-Space Theorem for $\mathbb{H}^{2}\times \mathbb{R}$ in a different perspective from Hauswirth-Rosenberg-Spruck. They proved the
following result.
\begin{theorem}[Earp-Nelli]\label{earp-nelli}
Let $\Sigma\subset \mathbb{H}^{2}\times \mathbb{R}$ be a complete surface with constant
mean curvature $\vert H\vert=\frac{1}{2}$, $\Sigma$ different from the Salavessa Graph in dimension $2$,  $\Gamma_{2}(\frac{1}{2})$. Then $\Sigma $
can not be properly immersed in the mean convex side of $\Gamma_{2}(\frac{1}{2})$.
\end{theorem}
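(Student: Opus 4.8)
The plan is to reprove Theorem~\ref{earp-nelli} with the techniques of the preceding sections, using as geometric barriers the foliation of $\mathbb{H}^{2}\times\mathbb{R}$ by the vertical translates of the Salavessa graph and using the weak maximum principle at infinity \eqref{wmp} in place of Alexandrov's moving device. First I would reduce to the stochastically complete case: a complete surface with $|{\bf H}|\equiv\tfrac12$ has bounded mean curvature, every point of $\mathbb{H}^{2}\times\mathbb{R}$ has empty cut locus, and the radial sectional curvatures of $\mathbb{H}^{2}\times\mathbb{R}$ are bounded below by $-1$; hence Criterion~\ref{criterion1}, equivalently Theorem~\ref{criterion2}, applies to the proper immersion $\varphi$ and shows that $\Sigma$ is stochastically complete, so that \eqref{wmp} is available on $M$.

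Next I set up the barrier foliation. Orient $\Gamma:=\Gamma_{2}(\tfrac12)$ so that its mean curvature vector points into the mean convex side $W$, and let $\{\Gamma_{t}:=\Gamma+t\}_{t\in\mathbb{R}}$ be its vertical translates; each $\Gamma_{t}$ is again an entire graph over $\mathbb{H}^{2}$ with constant mean curvature $\tfrac12$ whose mean curvature vector points in the direction of increasing $t$, and the $\Gamma_{t}$ foliate $\mathbb{H}^{2}\times\mathbb{R}$. Let $v\colon \mathbb{H}^{2}\times\mathbb{R}\to\mathbb{R}$ be the defining function of this foliation, $\{v=t\}=\Gamma_{t}$, normalized so that $v$ increases toward $W$; then $\overline{W}=\{v\ge 0\}$ and $\bar\nabla v$ never vanishes. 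Since $\varphi(M)\subset\overline{W}$, the function $v\circ\varphi$ is bounded below; put $t_{*}:=\inf_{M}v\circ\varphi\ge 0$. Thus $\varphi(M)$ lies in the closed mean convex side $\{v\ge t_{*}\}$ of the leaf $\Gamma_{t_{*}}$ and comes arbitrarily close to it, and it suffices to show $\varphi(M)\subset\Gamma_{t_{*}}$, which by completeness forces $\Sigma=\Gamma_{t_{*}}$ and contradicts the hypothesis that $\Sigma$ is not a vertical translate of $\Gamma$.

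There are two cases. If the infimum $t_{*}$ is attained at an interior point $p\in M$, then $\varphi(M)$ touches $\Gamma_{t_{*}}$ at $\varphi(p)$ and lies on its mean convex side; writing both surfaces near the contact point as graphs over $\Gamma_{t_{*}}$, the graph function of $\Sigma$ dominates that of $\Gamma_{t_{*}}$ with equality to first order at $\varphi(p)$, so a second-order comparison of the constant mean curvature equation shows that the mean curvature of $\Sigma$ at $\varphi(p)$ with respect to the normal pointing into $W$ equals $\tfrac12$, and the interior geometric maximum principle for the quasilinear (locally uniformly elliptic) constant mean curvature operator gives $\Sigma=\Gamma_{t_{*}}$. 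If the infimum is not attained, I would apply \eqref{wmp} to $u:=-v\circ\varphi$, whose supremum is $-t_{*}<\infty$, producing a sequence $x_{k}\in M$ with $v(\varphi(x_{k}))\to t_{*}$ and $\Delta(v\circ\varphi)(x_{k})>-1/k$, where $\Delta$ is the Laplacian of $M$. By the hypersurface formula $\Delta(v\circ\varphi)=\sum_{i=1}^{2}\bar\nabla^{2}v(\varphi_{*}e_{i},\varphi_{*}e_{i})+\langle\bar\nabla v,2{\bf H}\rangle$ for a local orthonormal frame $\{e_{i}\}$ on $M$; using that $\bar\nabla^{2}v$ restricted to a leaf $\Gamma_{t}$ equals $-|\bar\nabla v|$ times the second fundamental form of $\Gamma_{t}$, whose trace is $2\cdot\tfrac12$, and that $T_{x_{k}}M$ converges to $T_{\varphi(x_{k})}\Gamma_{t_{*}}$ because $\Sigma$ is a one-sided barrier for $\Gamma_{t_{*}}$ with locally uniform geometry, along $\{x_{k}\}$ the leaf term contributes $-|\bar\nabla v|+o(1)$ while $\langle\bar\nabla v,2{\bf H}\rangle\le|\bar\nabla v|$; a finer analysis of these two competing terms, exploiting the strict one-sidedness of $\Sigma$ relative to $\Gamma_{t_{*}}$ and the geometry of the leaves, yields $\Delta(v\circ\varphi)(x_{k})\le -c<0$ for all large $k$, contradicting $\Delta(v\circ\varphi)(x_{k})>-1/k$.

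The main obstacle is exactly this last estimate. Because $H=\tfrac12$ is the critical value in $\mathbb{H}^{2}\times\mathbb{R}$, the first-order comparison between $\Sigma$ and the barrier leaf degenerates to an equality, so a definite negative sign for $\Delta(v\circ\varphi)$ along $\{x_{k}\}$ cannot be read off from the mean curvature alone; it has to be extracted from the second-order behaviour of the difference of graph functions at a near-minimum together with the concavity of the leaves $\Gamma_{t}$ toward $W$. This is the step where the special value $\tfrac12$ and the entire-graph structure of the Salavessa surface are genuinely used, and where the weak maximum principle at infinity replaces a true contact point when $\Sigma$ is only asymptotic to $\Gamma_{t_{*}}$. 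Once this estimate is established, both cases close and Theorem~\ref{earp-nelli} follows.
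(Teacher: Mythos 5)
Your reduction to stochastic completeness and your interior--tangency case are essentially sound, but the proof collapses at exactly the step you flag yourself: in the non-attained case you never establish the strict bound $\Delta(v\circ\varphi)(x_k)\le -c<0$, and no ``finer analysis'' of the two displayed terms can produce it. At the critical value $|{\bf H}|=\tfrac{1}{2}$ the leaf term contributes $\mathrm{tr}_{T\Gamma_t}\bar{\nabla}^2v=-\langle\bar{\nabla}v,2{\bf H}_{\Gamma_t}\rangle=-|\bar{\nabla}v|$ while $\langle\bar{\nabla}v,2{\bf H}_\Sigma\rangle\le|\bar{\nabla}v|$, so even granting everything you assume the best available conclusion is $\Delta(v\circ\varphi)(x_k)\le o(1)$, which is perfectly compatible with \eqref{wmp}; there is no sign contradiction to be had. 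Moreover the weak maximum principle furnished by stochastic completeness carries no gradient information, so your assertion that $T_{x_k}M$ converges to the tangent planes of the leaves is unjustified (that would require the full Omori--Yau principle, which you have not verified for $\Sigma$), and without it even the $-|\bar{\nabla}v|+o(1)$ for the leaf term is unavailable. The obstruction is structural rather than technical: the vertical translates $\Gamma_t$, $t>0$, are themselves complete CMC $\tfrac{1}{2}$ surfaces properly embedded in the mean convex side of $\Gamma$, so any argument that sees only $|{\bf H}|=\tfrac{1}{2}$ and containment in $W$ --- which is all your case B uses --- cannot close. (This also shows the statement must be read as excluding all vertical translates of $\Gamma_2(\tfrac{1}{2})$, as your case A implicitly does; the published proofs of Earp--Nelli, like those of Hauswirth--Rosenberg--Spruck \cite{hauswirth-rosenberg-spruck} and Mazet \cite{mazet}, must invoke the fine structure of CMC $\tfrac{1}{2}$ surfaces precisely because the barrier foliation is critical.)

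For comparison, the route this paper takes to results of this type avoids critical barriers altogether. Theorem \ref{thmMain4bis} tests a hypersurface of the mean convex side against cylinders over geodesic spheres of radius $d_0$ in $\mathbb{H}^n$, for which the relevant ratio $h'/h=\coth$ is bounded below by $\coth d_0>1$, i.e.\ strictly above the critical value; feeding this into \eqref{Deltaf} through the Borb\'ely truncation of Theorem \ref{hypersurface} yields the strict estimate $\sup|{\bf H}|>\tfrac{n-1}{n}$ in one stroke, with no tangency dichotomy and no asymptotic analysis. If you want to recover the Earp--Nelli statement with the tools of this paper, that is the mechanism to imitate: choose a barrier with a definite curvature gap above $\tfrac{1}{2}$ rather than the critical foliation by translates of the Salavessa graph. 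Be aware, though, that even this route requires verifying the localization hypothesis $\varphi(M)\subset\varrho^{-1}((-\infty,d])$ for the chosen cylinder --- which is exactly where the translates $\Gamma_t$ escape the argument --- so the cylinder must be positioned using more than mere containment in the mean convex side.
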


We now give a brief description of those rotationally invariant graphs with constant mean curvature.
Let $(r,\vartheta, t)\in  [0,\infty)\times\mathbb{S}^{n-1}\times \mathbb{R}$ be cylindrical coordinates in $\mathbb{H}^n\times \mathbb{R}$ defined with respect to the axis $\ell=\{o\}\times \mathbb{R}$ for some $o\in \mathbb{H}^n$. A rotationally invariant graph is described in terms of these coordinates by a real function $t= u(r)$. It may be easily verified that such a graph has constant mean curvature $H$ (with respect to the upward orientation) if and only if
\begin{equation}
\label{flux-s}
I =  \sinh^{n-1}r \frac{u'}{\sqrt{1+u'^2}} - nH \int_0^{r} \sinh^{n-1}(\tau)\, d\tau
\end{equation}
for some constant $I$. If the graph reaches the axis $\ell$ orthogonally then $I=0$.  This is the case of a family of hemispheres with constant mean curvature varying in the entire range $(\frac{n-1}{n},+\infty)$. Indeed we have that
$u'(r)\to\infty$ as $r\to r_0$ for some $r_0\in (0,\infty]$ if and only if
\begin{eqnarray*}
1= nH \frac{\int_0^{r_0} \sinh^{n-1}(\tau)\, d\tau}{\sinh^{n-1}(r_0)}.
\end{eqnarray*}
Notice that there exists such a critical value $r=r_0$ if and only if $H\in (\frac{n-1}{n},+\infty)$, an interval that parameterizes the family of graphs defined  by the solutions of the ODE
\begin{equation}
u'^2 = \disp\frac{-nH\Big(\disp\frac{\int_0^{r} \sinh^{n-1}(\tau)\, d\tau}{\sinh^{n-1}(r)}\Big)^2}{1-n^2H^2 \Big(\disp\frac{\int_0^{r} \sinh^{n-1}(\tau)\, d\tau}{\sinh^{n-1}(r)}\Big)^2}.
\end{equation}
These graphs reach the axis $\ell$ orthogonally.  Reflecting this rotationally symmetric graph through the vertical slice at its maximum height, it yields a constant mean curvature hypersurface diffeomorphic to a sphere.

At the limit value $H=\frac{n-1}{n}$ we have $|u'(r)|<\infty$ in $[0,+\infty)$. Hence the graph generates a disc  with constant mean curvature $H=\frac{n-1}{n}$ and a unique vertical end. This disc separates $\mathbb{H}^n\times \mathbb{R}$ into two components and coincides with $\Gamma_n(\frac{n-1}{n})$ for $H=\frac{n-1}{n}$. The component in the mean convex side $\mathcal{H}$ of $\Gamma_n(\frac{n-1}{n})$ is foliated by the spheres described above with constant mean curvature $H\in (\frac{n-1}{n},+\infty)$. These hypersurfaces have a common tangency point in $o\in \mathbb{H}^n\times \{0\}$.

\begin{theorem}\label{thmMain4bis}
Let $\Sigma$ be a  hypersurface properly immersed into the mean convex side of $\Gamma_n(\frac{n-1}{n})$.
Then $\sup_{\Sigma}\vert H
\vert > \frac{n-1}{n}$.
\end{theorem}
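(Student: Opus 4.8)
The plan is to exploit the foliation of the mean convex side $\mathcal{H}$ of $\Gamma_n(\tfrac{n-1}{n})$ by the rotationally symmetric constant mean curvature spheres described above, and to use them as geometric barriers together with the comparison machinery of Section \ref{sec:generalized-comparison}. Concretely, I would argue by contradiction: suppose $\Sigma$ is properly immersed in $\mathcal{H}$ with $\sup_\Sigma|H|\le \tfrac{n-1}{n}$. Since $|H|$ is then bounded and $\Sigma$ is properly immersed, one may first verify via Theorem \ref{criterion2} (or Theorem \ref{thmMain3}, using the curvature decay guaranteed by the bounded ambient sectional curvature of $\mathbb{H}^n\times\mathbb{R}$ and the submanifold's bounded second fundamental form in this codimension-one setting) that $\Sigma$ is stochastically complete, so that the weak maximum principle \eqref{wmp} is available.

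Next I would set up the signed distance function adapted to this problem. Take $\Sigma_0$ to be one of the leaves of the foliation — say the limit disc $\Gamma_n(\tfrac{n-1}{n})$ itself, or better a nearby sphere — oriented so that $-\bar\nabla\varrho$ points into the mean convex side, and let $\varrho=\dist(\cdot,\Sigma_0)$. The equidistant hypersurfaces $\Sigma_d$ are exactly the other leaves of the foliation (this is the geometric content of the rotationally symmetric family all being tangent at $o$ and sweeping out $\mathcal{H}$), and by construction each $\Sigma_d$ has mean curvature $H_d\ge \tfrac{n-1}{n}$ with respect to $-\bar\nabla\varrho$, with $H_d\to\tfrac{n-1}{n}$ only at the "far" end. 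Since the ambient radial sectional curvature along the normal geodesics to these leaves equals the sectional curvature of $\mathbb{H}^n$, which is $-1$, the relevant comparison function is $h$ with $h''-h=0$, so $h'/h\to 1$ while the true $H_d$ stays strictly above $\tfrac{n-1}{n}$; this gap is the source of the strict inequality in the conclusion. I would then apply Proposition \ref{mainformula} (in codimension one, so $m=n$, $\ell$ absent): along a maximizing sequence $x_k$ for the truncated function $f_d=g_d\circ\varphi$ as in the proof of Theorem \ref{hypersurface}, one gets $|{\bf H}|(x_k)+\tfrac{1}{nkh}\ge H_d(x_k)$, and letting $k\to\infty$ yields $\sup_\Sigma|H|\ge \inf_{\mathcal{H}}H_d$. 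Because the leaves are genuine CMC spheres with $H_d>\tfrac{n-1}{n}$ except in the limit, one must argue that the infimum is not attained at the limiting value — this is where properness of $\Sigma$ is used: a properly immersed $\Sigma$ in $\mathcal{H}$ cannot approach $\Gamma_n(\tfrac{n-1}{n})$ asymptotically while staying in a region where $H_d$ degrades to $\tfrac{n-1}{n}$ without the maximizing sequence escaping to infinity in a controlled region, forcing $\inf H_d$ over the portion of $\mathcal{H}$ actually containing $\varphi(M)$ to be strictly bigger than $\tfrac{n-1}{n}$.

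The main obstacle I anticipate is precisely that last point: upgrading the non-strict estimate $\sup_\Sigma|H|\ge \inf H_d$ to the strict inequality $\sup_\Sigma|H|>\tfrac{n-1}{n}$. The comparison principle by itself only gives $\ge \tfrac{n-1}{n}$ in the limit; one needs to rule out equality. I would handle this either (i) by a compactness/halfspace argument — if $\sup_\Sigma|H|=\tfrac{n-1}{n}$ exactly, then $\Sigma$ would have to be squeezed against $\Gamma_n(\tfrac{n-1}{n})$, contradicting properness unless $\Sigma$ coincides with a piece of it, which is excluded since it lies in the open mean convex side — or (ii) by choosing $\Sigma_0$ to be a leaf $\Sigma_{d_0}$ strictly interior to $\mathcal{H}$ so that $\varphi(M)$, having an interior point relative to $\Sigma_{d_0}$ on the mean convex side, lies in $\varrho^{-1}((-\infty,d])$ with all equidistants in that range having $H_d\ge H_{d_0}>\tfrac{n-1}{n}$; then Theorem \ref{hypersurface} gives $\sup_\Sigma|H|\ge H_{d_0}>\tfrac{n-1}{n}$ directly. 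Option (ii) is cleaner and is the route I would take, the only care needed being to check that the foliation structure lets every properly immersed $\Sigma\subset\mathcal{H}$ be enclosed this way.
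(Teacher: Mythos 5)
Your overall architecture --- reduce to the stochastically complete case via Theorem \ref{criterion2}, truncate $f=g\circ\varrho\circ\varphi$ as in Theorem \ref{hypersurface}, and feed inequality \eqref{Deltaf} to a maximizing sequence, securing strictness by choosing the reference hypersurface so that every relevant equidistant has mean curvature strictly above $\frac{n-1}{n}$ --- is exactly the paper's. The gap is in your choice and analysis of the barrier. You take $\Sigma_0$ to be a leaf of the CMC foliation and assert that ``the equidistant hypersurfaces $\Sigma_d$ are exactly the other leaves of the foliation,'' from which you deduce $H_d\ge \frac{n-1}{n}$. This is false: as you yourself note, the leaves all pass through the common tangency point $o\in\mathbb{H}^n\times\{0\}$, so any two distinct leaves are at distance zero there and at positive distance elsewhere; no leaf is an equidistant of another. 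The true equidistants of a CMC leaf are not CMC, and you give no control on their mean curvature, so the crucial lower bound $\inf H_d>\frac{n-1}{n}$ --- the entire content of the estimate --- is unsupported. Your fallback (i), squeezing $\Sigma$ against $\Gamma_n(\frac{n-1}{n})$, is not an argument the machinery of Section \ref{sec:generalized-comparison} delivers and is not developed.

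The paper does not use the foliation in the proof at all; it is only descriptive. The actual barrier is $\Sigma_0=\widetilde\Sigma_0\times\mathbb{R}$, the vertical cylinder over a geodesic sphere $\widetilde\Sigma_0\subset\mathbb{H}^n$ of radius $d_0$, chosen so that its mean convex side contains points of $\varphi(M)$. For this family the equidistants genuinely are cylinders of the same type, with $n-1$ principal curvatures equal to $\coth(\mathrm{radius})$ and one vanishing vertical principal curvature, so \eqref{Deltaf} along the maximizing sequence gives $m|{\bf H}|+\frac{1}{kh}\ge (m-1)\coth d_0 > m-1$ (here $m=n$), i.e. $\sup_\Sigma|H|>\frac{n-1}{n}$, the strict inequality coming simply from $\coth d_0>1$ for finite $d_0$. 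If you replace your CMC leaves by these cylinders, the rest of your argument goes through essentially as you wrote it, and no limiting or properness discussion about approaching $\Gamma_n(\frac{n-1}{n})$ is needed.
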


\begin{remark}The Theorem \ref{thmMain4bis} holds under the hypothesis ``\emph{stochastically complete}'' instead of  ``\emph{properly immersed}''.
\end{remark}

\begin{proof}
The proof of Theorem \ref{thmMain4bis} is similar to the proof of
Theorem \ref{submersion-hyp}. We will reproduce  it here for sake of
completeness.

By assumption, there exists $x_0\in M$ such that $\varphi(x_0)\in \mathcal{H}$. There exists a geodesic sphere $\widetilde\Sigma_0$ in $\mathbb{H}^n$ with radius $d_0$ such that the cylinder $\Sigma_0 = \widetilde\Sigma_0\times \mathbb{R}$ contains points of $\varphi(M)$ in its mean convex side.  Then fix $d<d_0$ such that $\varphi(M)\cap \varrho^{-1}((0,d))\neq \emptyset$.  In terms of the notation fixed earlier in this text, define the truncation of $f$ as in Theorem \ref{hypersurface}. Using (\ref{Deltaf}) we obtain
\begin{equation*}
m|{\bf H}|+\frac{1}{kh} \geq (n-2)\frac{h'}{h}-(n-1)\frac{h'}{h} + m\frac{h'}{h} = (m-1)\frac{h'}{h}\geq (m-1)\coth d_0 >(m-1),
\end{equation*}
what concludes the proof. \hfill $\square$\end{proof}

\section{Wedgedly bounded submanifolds}\label{sec:wedge}

Let $N^n$ be a Riemannian manifold with a pole $p$. The non-degenerate cone in $N$ with vertex  at $p$ determined by  ${\sf v}\in T_p N$, $|{\sf v}|=1$, and $a\in (0,1)$ is defined by $$\mathcal{C}(p, {\sf v},a)=\left\{\exp_{p}(t{\sf w})\in N:\, t\geq 0,\, {\sf w}\in T_{p}N,\,\vert {\sf w}\vert=1, \,\langle {\sf v},\,{\sf w}\rangle \geq a\right \}.$$

\begin{definition} Let $L^\ell$ be a complete Riemannian manifold.   The wedge determined by ${\sf v}$ and $a$ is the subset $\mathcal{W}(p,{\sf v},a)\subset N\times L$ of the Riemannian product $N\times L$
 defined by  $$\mathcal{W}(p,{\sf v},a)=\mathcal{C}(p,{\sf v},a)\times L.$$ \end{definition}

A. Borb\'{e}ly in \cite{borbely-Bull-Aus} proved that complete minimal immersion $\varphi \colon M \to \mathbb{R}^{3}$ satisfying the Omori-Yau maximum principle can not be contained in a wedge of $\mathbb{R}^{3}$. His ideas could be easily adapted to obtain mean curvature estimates for
{\em wedgedly} bounded, stochastically complete submanifolds of
$B\times F$.

\begin{theorem}\label{thmMain5}
Let  $\varphi \colon M \to N\times L$
an isometric immersion of a stochastically complete $m$-dimensional  manifold
$M$, where $m\geq \ell+1$. Suppose that $\varphi(M)$ is a submanifold of a wedge $\mathcal{W}(p,{\sf v},a)$, for some ${\sf v}\in T_p N$, $|{\sf v}|=1$, and $a\in (0,1)$.  Suppose that the sectional curvatures of $N$ satisfy
\begin{equation}
K_{N} \le -c^2\,\, \mbox{ in } \,\, \mathcal{C}(p,{\sf v},a),
\end{equation}
 for some constant $c\ge 0$.
Then $$\sup_{M} \vert {\bf  H}\vert  > 0 \,\,\,
\mathrm{if}\,\,\, c=0\quad \mathrm{and}\quad \sup_{M} \vert {\bf H}\vert
\geq  \displaystyle\frac{(m-\ell)c}{m} \,\,\,
\mathrm{if}\,\,\, c>0.$$
 \end{theorem}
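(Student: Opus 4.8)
The plan is to reduce Theorem \ref{thmMain5} to the mean curvature estimate in Theorem \ref{thm-product-intro} (equivalently Theorem \ref{thm4}) by constructing, for a wedge $\mathcal{W}(p,{\sf v},a) = \mathcal{C}(p,{\sf v},a)\times L$, a suitable embedded hypersurface $\Sigma_0\subset N$ whose equidistant hypersurfaces shield the cone $\mathcal{C}(p,{\sf v},a)$ from outside. The geometric idea, going back to Borb\'{e}ly, is that a solid cone with vertex at the pole $p$ and axis ${\sf v}$ can be placed at positive distance from a totally geodesic (or, when $c>0$, suitably curved) hypersurface $\Sigma_0$ passing "behind" the vertex and not meeting the cone; then the cone is contained in the region $\{\varrho \ge \varrho_{\min}\} \cap \{\varrho \le d\}$ for every finite truncation level $d$, and one applies the comparison principle on each such slab.

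Concretely, first I would fix the pole $p$ and the unit vector ${\sf v}$, and consider the geodesic $t\mapsto \gamma(t)=\exp_p(-t{\sf v})$ going in the direction opposite to the cone's axis. For a small $\varepsilon>0$ let $\Sigma_0\subset N$ be the hypersurface exponentiated orthogonally from $\gamma(\varepsilon)$ — when $c=0$ this is a totally geodesic hyperplane through $\gamma(\varepsilon)$ orthogonal to ${\sf v}$, and when $c>0$ one takes instead a horosphere (or a large geodesic sphere) placed so that its convex side contains the whole cone. In either case, since the cone $\mathcal{C}(p,{\sf v},a)$ with $a\in(0,1)$ opens strictly less than a half-space, an elementary comparison of distances in the model space shows $\varrho = \dist(\cdot,\Sigma_0)$ is bounded below by a positive constant on $\mathcal{C}(p,{\sf v},a)$, is a smooth regular function on a neighborhood $\mathcal{U}$ of the cone (no focal points of $\Sigma_0$ occur there because $\Sigma_0$ is convex toward the cone), and the curvature hypothesis $K_N \le -c^2$ in the cone gives the bound \eqref{comp-reverse1} with $G\equiv c^2$, so that $h(t)=\cosh(ct)$ (or $h(t)=e^{ct}$ for the horosphere choice) when $c>0$ and $h(t)=1+t\cdot(\text{const})$, i.e. $h'/h\to 0$, when $c=0$. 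The convexity normalization on $\Sigma_0$ is exactly \eqref{comp-reverse2}: for the horosphere $h'(0)/h(0)=c$ and $\bar\nabla^2\varrho|_{T\Sigma_0}\ge c\,\langle\,,\,\rangle$.

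Next, for the lifted distance $\widetilde\varrho(x,y)=\varrho(x)$ on $\mathcal{U}\times L$, the inclusion $\varphi(M)\subset\mathcal{W}(p,{\sf v},a)=\mathcal{C}(p,{\sf v},a)\times L$ gives $\varphi(M)\subset \widetilde\varrho^{-1}([\varrho_{\min},\infty))$ with $\varrho_{\min}>0$; but $\widetilde\varrho$ need not be bounded above on $\varphi(M)$, so I would run the Borb\'{e}ly-style truncation used in the proof of Theorem \ref{hypersurface}: choose any level $d$ large enough that $\varphi(M)\cap\widetilde\varrho^{-1}((\varrho_{\min},d))\ne\emptyset$, form the truncated function $f_d$, note $f_d$ is bounded above, apply the weak maximum principle (valid since $M$ is stochastically complete by hypothesis) to get a sequence $x_k$ with $f_d(x_k)\to f_d^{*}$ and $\Delta f_d(x_k)\le 1/k$, and argue the $x_k$ eventually lie in the open region where $f_d$ is smooth and coincides with $f$ — here one uses that the supremum of $f$ is attained at interior levels because $\varrho_{\min}>0$ is itself a level that is exceeded. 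Then inequality \eqref{mainformula-reverse}, namely $\Delta f/h \ge m(h'/h - |{\bf H}|)$, yields $m|{\bf H}|(x_k) + 1/(kh) \ge m\, (h'/h)(\varrho(\varphi(x_k)))$. For the product $N\times L$ the correct version is the one proved in the proof of Theorem \ref{thm-product-intro}, which loses the factor $(m-\ell)/m$; carrying that through gives $m|{\bf H}|(x_k)+1/(kh) \ge (m-\ell)\,\inf (h'/h)$. Letting $k\to\infty$ and using $\inf_{[\varrho_{\min},\infty)} h'/h = c$ when $c>0$ (and $>0$ when $c=0$, on the compact slab) produces $\sup_M|{\bf H}| \ge (m-\ell)c/m$, respectively $\sup_M|{\bf H}|>0$.

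The main obstacle I expect is the construction and verification of $\Sigma_0$: one must ensure that $\varrho=\dist(\cdot,\Sigma_0)$ is regular (free of focal points of $\Sigma_0$) throughout a neighborhood $\mathcal{U}$ of the entire infinite cone, that $\inf_{\mathcal{U}}h>0$, and that the normalization \eqref{comp-reverse2} holds — this is where the precise choice between a geodesic hyperplane, a horosphere, and a geodesic sphere matters, and where the hypotheses $a\in(0,1)$ (so the cone is strictly narrower than a half-space) and $K_N\le -c^2$ in the cone are genuinely used. The $c=0$ case is slightly delicate because then $h'/h$ is not bounded away from zero on all of $[\varrho_{\min},\infty)$, only on each compact slab $[\varrho_{\min},d]$; one handles this by noting that if $\sup_M|{\bf H}|=0$ the estimate on the slab forces $\inf_{[\varrho_{\min},d]} h'/h = 0$ for the chosen $d$, but $h>0$ and $h$ strictly convex (since $G=0$ would have to be replaced by a strict comparison, or one simply chooses $\Sigma_0$ to be a sphere of large but finite radius so that $h'/h>0$ on the slab) gives a contradiction; alternatively one invokes Borb\'{e}ly's original argument directly. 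These are exactly the technical points the authors flag by saying Borb\'{e}ly's ideas "could be easily adapted".
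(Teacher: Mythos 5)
Your overall strategy (build an auxiliary hypersurface $\Sigma_0$, form $f=g\circ\varrho\circ\varphi$, truncate, and apply the weak maximum principle together with the Hessian comparison) is the right one, but the specific geometric choice of $\Sigma_0$ breaks the localization step, and this is a genuine gap. You place $\Sigma_0$ \emph{behind} the vertex (a totally geodesic hyperplane or a horosphere), so that $\varrho=\dist(\cdot,\Sigma_0)$ is bounded \emph{below} on the cone and tends to $+\infty$ along it. Since $g$ is increasing, the truncated function $f_d$ attains its supremum $g(d)$ on the entire plateau $\{\varrho\ge d\}$, which the immersion may well enter (stochastic completeness does not confine $\varphi(M)$ to a slab $\{\varrho\le d\}$). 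The weak maximum principle then produces a maximizing sequence that can sit entirely in that plateau, where $f_d$ is locally constant and $\Delta f_d=0\le 1/k$ holds vacuously; no inequality on $|{\bf H}|$ comes out. Your claim that ``the supremum of $f$ is attained at interior levels because $\varrho_{\min}>0$ is itself a level that is exceeded'' confuses the lower truncation (which is indeed harmless, as in Theorem \ref{hypersurface}) with the upper one: in Theorem \ref{hypersurface} the upper cut at $d$ is never active because $\varphi(M)\subset\varrho^{-1}((-\infty,d])$ is a \emph{hypothesis} there, whereas in the wedge it fails. A secondary problem is that for $c=0$ your totally geodesic $\Sigma_0$ gives $h\equiv 1$, $h'/h\equiv 0$, so even with a correct localization the estimate would be the vacuous $\sup|{\bf H}|\ge 0$.

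The paper resolves exactly this by reversing the geometry: fix a point $\bar q=\varphi(q)$ in the wedge, go far out along the axis to $\gamma(t_0)$, $t_0\gg1$, and take $\Sigma_0=\partial B^N_{\gamma(t_0)}(r)\times L$ with $r$ slightly smaller than $\dist_N(\bar q,\gamma(t_0))$. Because $a\in(0,1)$ makes the cone strictly narrower than a half-space, this sphere separates the wedge into a \emph{bounded} component $\mathcal{W}_1$ containing $\bar q$ and an unbounded component $\mathcal{W}_2$; the test function is $g(\varrho)$ with $g(t)=t^2/2$ (so $h_N(t)=t$) when $c=0$ and $g(t)=\cosh(ct)/c$ (so $h_N(t)=\sinh(ct)$) when $c>0$, truncated to the constant $g(r)$ on $\mathcal{W}_2$. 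Now the plateau value $g(r)$ is \emph{strictly below} $f(\bar q)$, so the maximizing sequence is forced into the bounded smooth region $\mathcal{W}_1$, where the computation of Theorem \ref{thm-product-intro} gives $\sup_M|{\bf H}|\ge \frac{m-\ell}{m}\inf_{[0,t_0]}h_N'/h_N$, i.e.\ $\ge\frac{(m-\ell)c}{m}$ for $c>0$ and $>0$ for $c=0$. To repair your write-up you would need either this choice of a separating sphere centered far down the axis (so the supremum is approached in a bounded region), or a bounded exhaustion such as $g(\varrho)=-e^{-c\varrho}$ that removes the need for an upper truncation altogether; as written, the reduction to Theorems \ref{thm4}/\ref{thm-product-intro} via a hyperplane or horosphere behind the vertex does not go through.
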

\begin{remark}Sara Boscatti in her master thesis \cite{sara} proved Theorem \ref{thmMain5} for wedgedly bounded submanifolds of the Euclidean space.
\end{remark}

\begin{proof}\hspace{-1mm}:
Let $\gamma (t) =\exp_{p}^{N}(t{\sf v})$ be the geodesic of $N$ satisfying $\gamma(0)=p$, $\gamma'(0)={\sf v}$  called  the axis of $\mathcal{C}_{N}(p, {\sf v},a)$.

Consider $y_0\in L$ such that  there exists a point  $\bar{q}=\varphi (q)\in \mathcal{C}_{N}(p, {\sf v},a)\times\{y_0\}$.
Let $B^{N}_{\gamma(t_0)} (r_{\bar{q}})$ be the geodesic ball of $N$ with center at $\gamma(t_0)$ and radius $r_{\bar{q}}={\rm dist}_{N}(\bar{q}, (\gamma(t_0), y_0))$. For  $t_0\gg 1$ the set $\mathcal{C}_{N}(p,v,a)\setminus B^{N}_{\gamma(t_0)}(r_{\bar{q}})$  has two connected components. Taking a radius $r>0$ slightly smaller than $r_{\bar{q}}$  we have that the set $\mathcal{W}(p,{\sf v},a)\setminus B^{N}_{\gamma(t_0)}(r)\times L$ also has two connected components $\mathcal{W}_1$ and $\mathcal{W}_2$, one of them, say $\mathcal{W}_{1}$, containing  $\bar{q}$.

Hence we fix $\Sigma_0 =\partial B^{N}_{\gamma(t_0)}(r)\times L$ and define $\widetilde\varrho = \textrm{dist}_{N\times L}(\cdot, \Sigma_0)$. Note that
$\widetilde\varrho(q) = \varrho (\pi(q))$ where  $\pi:N\times L \to L$ is the standard projection and $\varrho = \textrm{dist}_N(\cdot, \partial B^{N}_{\gamma(t_0)}(r))$. We define
$$
g_{r}(q)=\left\{ \begin{array}{lll} g(\varrho(q)) & \mathrm{if}& q  \in  \mathcal{W}_1\cup  B^{N}_{\gamma(t_0)}(r)\times L\\
g(r) & \mathrm{if}& q  \in \mathcal{W}_2
\end{array}\right.
$$
where
$$
g(t)=\left\{ \begin{array}{lll}
t^{2}/2 & \mathrm{if} &  c=0,\\
\cosh(ct)/c & \mathrm{if} & c>0.
\end{array}\right.
$$

It follows that the function $f:M\to \mathbb{R}$ defined by $f=g_{r}\circ \varrho\circ\varphi$ is continuous, bounded above and smooth in $\mathcal{W}_1$. In order to compute the Laplacian of $f|_{\varphi^{-1}(\mathcal{W}_1)}$ we proceed as in the proof of Theorem \ref{thm-product-intro}.
Computing $\Delta f(x_k)$ along a maximizing sequence $\{x_k\}\in \varphi^{-1}(\mathcal{W}_1)$ and  letting $k\to\infty$ we obtain
$$
\sup_{M} \vert {\bf H}\vert \geq  \frac{(m-\ell)}{m}\, \inf_{[0, t_0] }\frac{h_N'}{h_N},
$$
where $h_N (t) = g'(t)$. This concludes the proof. \hfill $\square$

\end{proof}

\begin{remark} These estimates are similar to those ones found by Bessa, Lima and Pessoa in \cite{bessa-lima-pessoa} for $\phi$-bounded submanifolds, however, the class of {\em wedgedly} bounded
submanifolds are more general than the class of $\phi$-bounded
considered by them.
 \end{remark}

\begin{acknowledgements}
The authors want to express their gratitude to the  Universidade Federal de Alagoas-UFAL for their support during the event {\em  III Workshop de Geometria Diferencial} where this work started.
\end{acknowledgements}

\end{document}